\documentclass[11pt,reqno]{amsart}

\usepackage{amssymb}
\usepackage[colorlinks=true,citecolor=black,linkcolor=black,urlcolor=blue]{hyperref}

\headheight=8pt \topmargin=0pt \textheight=624pt
\textwidth=432pt \oddsidemargin=18pt \evensidemargin=18pt

\theoremstyle{plain}
\newtheorem{thm}{Theorem}
\newtheorem{corr}{Corollary}[thm]
\newtheorem{lem}{Lemma}[section]

\numberwithin{equation}{section}

\newcommand{\arxiv}[1]{\href{http://arxiv.org/abs/#1}{\texttt{arXiv:#1}}}


\newcommand{\eps}{\epsilon}
\newcommand{\tu}{\tau}
\newcommand{\al}{\alpha}
\newcommand{\M}{\mathbf{M}}
\newcommand{\frlog}{\frac{\log d}{d}}
\newcommand{\G}{\mathcal{G}_{n,d}}

\newcommand{\ER}{\mathrm{ER}(n,d/n)}

\newcommand{\E}[1]{\mathbb{E}\left[#1\right]}
\newcommand{\pr}[1]{\mathbb{P}\left[#1\right]}

\title[Percolation on random graphs]{Percolation with small clusters on random graphs}

\author{Mustazee Rahman}

\address[Mustazee Rahman]{Department of Mathematics\\
University of Toronto\\
40 St. George Street\\
Toronto\\
ON M5S 2E4\\
Canada}

\thanks{The author's research was supported by an NSERC CGS grant.}

\email[Mustazee Rahman]{mustazee@math.toronto.edu}

\date{}

\subjclass[2010]{05C30, 05C69, 05C80}
\keywords{random graphs, regular graph, independent set, induced forest, percolation}

\begin{document}

\begin{abstract}
Consider the problem of determining the maximal induced subgraph in a random $d$-regular graph
such that its components remain bounded as the size of the graph becomes arbitrarily large.
We show, for asymptotically large $d$, that any such induced subgraph has size density at most $2(\log d)/d$
with high probability. A matching lower bound is known for independent sets.
We also prove the analogous result for sparse Erd\H{o}s-R\'{e}nyi graphs.
\end{abstract}

\maketitle

\section{Introduction} \label{sec:intro}

A subset $S$ of a graph $G$ is a \emph{percolation set with clusters of size at most $\tu$} if all the components of
the induced subgraph $G[S]$ have size at most $\tu$. For instance, independent sets have clusters of size one.
We consider the following problem on random $d$-regular graphs
and Erd\H{o}s-R\'{e}nyi graphs of average degree $d$. Given a threshold $\tu$ what is the density, $|S|/|G|$,
of the largest percolation sets $S$ with clusters of size at most $\tu$ on the aforementioned graph ensembles?
We say $S$ is a percolation set with small clusters when we do not want to mention the parameter $\tu$ explicitly.

Edwards and Farr \cite{EF} study this problem for some general classes of graphs under the notion of graph fragmentability.
They consider a natural $\tu \to \infty$ version of the problem and provide upper and lower bounds on densities of percolation
sets with small clusters for bounded degree graphs. Their bound is sharp for the family of graphs with maximum degree 3,
and optimal, in a sense, for several families of graphs such as trees, planar graphs or graphs with a fixed excluded minor.
However, their bounds are not of the correct order of magnitude for \emph{random} $d$-regular graphs.

For random graphs the correct order of the density of percolation sets with small clusters can be deduced
from just considering the largest independent sets (that is, the $\tau = 1$ case).
Bollob\'{a}s \cite{Bol} proved that with high probability the density of the largest independent sets in
a random $d$-regular graph is at most $2(\log d)/d$ for $d \geq 3$. The same bound was proved
for Erd\H{o}s-R\'{e}nyi graphs of average degree $d$ by several authors (see \cite{Bolbook} Theorem 11.25).
Frieze and {\L}uczak \cite{Fri, FL} provided lower bounds of order $2(\log d - \log \log d)/d$ for large $d$.

Our main result is that relaxing the problem from independent sets to percolation sets with small clusters
provides no improvement to the maximum density for large $d$. Roughly
speaking, for both the aforementioned graph ensembles we prove that for any $\tu$ and large $d$,
the density of the largest percolation sets with clusters of size at most $\tu$ is bounded above by $2(\log d)/d$
with high probability. In fact, $\tau$ may be taken to be of linear order in the size of the graph.
Precise statements are in Section \ref{sec:results}.

\subsection{Preliminaries and terminology} \label{sec:notation}

Let $V(G)$ and $E(G)$ denote the set of vertices and edges of a graph $G$, respectively.
For an integer $\tu \geq 1$ define
$$
\al^{\tu}(G) = \max \left \{\frac{|S|}{|V(G)|} : S \subset V(G) \;\text{is a percolation set with clusters of size at most}\; \tu \right \}.
$$
We say that a sequence of events $E_n$, generally associated to $\G$, occurs with high probability if
$\pr{E_n} \to 1$ as $n \to \infty$.

We use the configuration model (see \cite{Bolbook} chapter 2.4) to
sample a random $d$-regular graph $\G$ on $n$ labelled vertices.
Recall that $\G$ is sampled in the following manner. Each of the $n$ distinct vertices emit
$d$ distinct half-edges, and we pair up these $nd$ half-edges uniformly at random. (We tactically
assume that $nd$ is even.) These $nd/2$ pairs of half-edges can be glued into full edges
to yield a random $d$-regular graph. There are $(nd-1)!! = (nd-1)(nd-3)\cdots 3\cdot 1$ such graphs.

The resulting random graph $\G$ may have loops and multiple edges, that is, it is a multigraph.
However, the probability that $\G$ is a simple graph is uniformly bounded away from zero
at $n \to \infty$. In fact, Bender and Canfield \cite{BC} and Bollob\'{a}s \cite{Bol2} showed that
$$\pr{\G \;\text{is simple}} \underset{n \to \infty}{\longrightarrow}  e^{ \frac{1 - d^2}{4}}.$$

Also, conditioned on $\G$ being simple its distribution is a uniform $d$-regular simple graph on $n$ labelled vertices.
It follows from these observations that any sequence of events that occur with high probability for $\G$ (as $n \to \infty$)
also occurs with high probability for a uniformly chosen simple $d$-regular graph.

We denote by $\mathrm{ER}(n,p)$ an Erd\H{o}s-R\'{e}nyi graph on $n$ vertices
and edge inclusion probability $p$. In this model every pair of vertices $\{u,v\}$ is
independently included as an edge with probability $p$.
We are interested in the sparse case when $p = d/n$ for a fixed $d$.

We set the function $h(x) = -x\log(x)$ for $0 \leq x \leq 1$ with
the convention that $h(0) = 0$. We will use the following properties of $h(x)$ throughout.
\begin{align} \label{eqn:hproperty}
(1) & \quad h(xy) = xh(y) + yh(x). \\
(2) & \quad h(1-x) \geq x - x^2/2 - x^3/2 \;\;\text{for}\; 0 \leq x \leq 1. \nonumber \\
(3) & \quad h(1-x) \leq x - x^2/2 \;\;\text{for}\;\; 0 \leq x \leq 1. \nonumber
\end{align}

The inequalities in (\ref{eqn:hproperty}) follow from Taylor expansion. It is clearly valid for $x = 1$.
For $0 \leq x < 1$ note that $-\log(1-x) = \sum_k x^k/k$. Hence, $-\log(1-x) \geq x + x^2/2$,
which implies that $h(1-x) \geq x - (1/2)x^2 - (1/2)x^3$. Furthermore,
$-\log(1-x) \leq x + (1/2)x^2 + (1/3)x^3 (1 + x + x^2 \cdots)$, which shows that
$-\log(1-x) \leq x + (1/2)x^2 + x^3/(3(1-x))$ for $0 \leq x < 1$. Consequently,
$h(1-x) \leq x - (1/2)x^2 - (1/6)x^3 \leq  x - (1/2)x^2$.

\subsection{Statement of results} \label{sec:results}

\begin{thm} \label{thm:RRGperc}
Let $\tau = \eps_d \frac{\log d}{d} \, n$ where $0 < \eps_d \leq 1$ and $\eps_d \to 0$ as $d \to \infty$.
Given $\eps > 0$ there exists a $d_0 = d_0(\eps, \{\eps_d\})$ such that if $d \geq d_0$, then with high probability any
induced subgraph of $\G$ with components of size at most $\tau$ has size at most
$$(2 + \eps) \frac{\log d}{d}\,n\,.$$
\end{thm}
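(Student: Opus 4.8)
The plan is to use the first moment method. I would count the expected number of "bad" induced subgraphs — those that are percolation sets with clusters of size at most $\tau$ and whose size exceeds $(2+\eps)(\log d / d)\, n$ — and show this expectation tends to zero. If the expected number of such sets vanishes, then with high probability no such set exists, which is exactly the claim. The configuration-model remarks in the preliminaries let me transfer any high-probability statement to the uniform simple $d$-regular graph, so I may work entirely in the configuration model where edge-pairings are easy to count.

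**The key combinatorial input** is a parametrization of a candidate set $S$ by (a) its size $|S| = s$, and (b) the component structure of the induced subgraph $G[S]$. Since every component has at most $\tau$ vertices, I would sum over the number of internal edges $m = |E(G[S])|$ of the induced subgraph. For a fixed vertex set of size $s$ with a prescribed set of $m$ internal half-edge pairings, the probability that exactly these pairings are realized — and crucially that $S$ sends \emph{no further edges outside the prescribed structure beyond the required ones} — can be estimated by counting matchings in the configuration model. The governing quantity is: choose the set (a factor like $\binom{n}{s}$), choose which half-edges pair internally, and weight by the probability that the random pairing respects this; the constraint that clusters are small forces $m$ to be controlled relative to $s$. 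I expect the entropy bookkeeping to be naturally expressed through the function $h(x) = -x\log x$, and the inequalities in (\ref{eqn:hproperty}) are there precisely to bound $\log \binom{n}{s}$ and the pairing probabilities cleanly.

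**The critical quantitative step** is to show that the dominant contribution comes from $S$ being nearly independent: because each half-edge emanating from $S$ has probability roughly $sd/(nd) = s/n$ of landing back inside $S$, the expected number of internal edges is about $s^2 d / (2n)$, and for $s$ of order $(\log d / d)\, n$ this is of order $(\log d)^2 / d \cdot s$, which is $o(s)$ for large $d$. Hence a typical large set is almost independent, and allowing up to $\tau = \eps_d (\log d/d)\, n$ vertices per component — i.e. a linearly small but nontrivial number of internal edges — cannot create more than a negligible correction to the independent-set threshold. The heart of the argument is a single-variable optimization: writing $s = \sigma n$, I would bound the log-expectation by $n$ times a function $\Phi(\sigma)$ (maximized over the admissible internal-edge counts), and show $\Phi(\sigma) < 0$ whenever $\sigma \geq (2+\eps)\log d / d$, for all $d \geq d_0$. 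The known independent-set bound of $2(\log d)/d$ is the $\tau = 1$ specialization, so I am effectively proving that the small-cluster relaxation shifts the exponent by only $o(\log d / d)$.

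**The main obstacle** will be controlling the entropy of the component structure: a percolation set with clusters up to size $\tau$ can have a wildly varying number of components and internal edges, so I must sum a potentially large combinatorial weight counting the ways to arrange $m$ internal edges into components each of size $\le \tau$, and show this extra entropy is dominated by the gain from the small value of $\sigma$. The delicate point is that $\tau$ is allowed to be of \emph{linear} order in $n$, so I cannot treat components as $O(1)$-sized objects; instead I expect to bound the number of internal-edge configurations crudely (e.g. by $\binom{\binom{s}{2}}{m}$ or a spanning-forest count) and verify that, for $\sigma$ near $2\log d/d$ and $m = o(s)$ forced by the exponential suppression, this crude bound is still absorbed. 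Getting the dependence of $d_0$ on both $\eps$ and the sequence $\{\eps_d\}$ correct — so that the $\tau$-dependent entropy term is genuinely negligible uniformly — is where the care lies.
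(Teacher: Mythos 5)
There is a genuine gap, and it sits exactly at the step you call the key combinatorial input. Your plan rests on the assertion that ``the constraint that clusters are small forces $m$ to be controlled relative to $s$,'' but this is false as a deterministic statement: here $\tau = \eps_d \frac{\log d}{d}n$ is \emph{linear} in $n$, so a single component may span up to $d\tau/2$ edges, and a percolation set with clusters of size at most $\tau$ can a priori contain as many as $ds/2$ internal edges --- no fewer than an arbitrary set. (The trick that does give such a bound for $\ER$ in the paper --- delete one edge per short cycle so that $m \leq s + \#\{\text{cycles of length} \leq \tau\}$ --- is unavailable here precisely because the number of cycles of linear length is enormous.) Your proposed fallback of bounding the component-structure entropy crudely by $\binom{\binom{s}{2}}{m}$ discards the cluster constraint altogether: with that bound, summing the expected number of size-$s$ sets with $m$ internal edges over all $m$ simply returns $\binom{n}{s}$, so the first moment cannot vanish. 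The supporting heuristic is also numerically wrong: for $s = \frac{\log d}{d}n$ the typical internal edge count is $s^2 d/(2n) = \frac{\log d}{2}\,s$, which is $\gg s$, not ``$\frac{(\log d)^2}{d}s = o(s)$'' as you wrote. A typical set at the critical density is very far from independent, so ``the dominant contribution comes from $S$ being nearly independent'' is not something the cluster condition hands you; it is the whole difficulty.

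The paper closes this gap by supplying the control on $m$ \emph{probabilistically}, via a two-stage decomposition that your one-shot count lacks. Lemma \ref{lem:ksparse} shows that for a suitable $k = \eps'_d \log d = o(\log d)$, with high probability \emph{every} subset of $\G$ of size at most $C_{k,d}n$ is $k$-sparse, where $k$ is chosen so that $C_{k,d} = e^{-4}(2k/d)^{1+1/(k-1)} \geq \eps_d \frac{\log d}{d}$, hence $\tau \leq C_{k,d}n$. On that event each component of a percolation set with clusters of size at most $\tau$ is $k$-sparse, hence so is the whole set --- this is the missing step that converts the cluster condition into the bound $m \leq ks$ (note $ks$ may still exceed $s$; ``much sparser than typical,'' not ``nearly independent,'' is the right picture). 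Lemma \ref{lem:densitybound} then runs a separate first moment over pairs (set of size $\al n$, at most $k\al n$ induced edges) and shows that no $k$-sparse induced subgraph exceeds $(2+\eps)\frac{\log d}{d}n$ with high probability; since $k = o(\log d)$, the entropy optimization in $\al$ reproduces the independent-set exponent up to $o(1)$, which is the single-variable analysis you correctly anticipated. The intersection of the two high-probability events implies the theorem deterministically. If you insisted on a direct count you would instead have to sum over partitions of $S$ into blocks of size at most $\tau$ and estimate the probability of \emph{no cross-block edges} --- a genuinely different computation from the one you sketch, and one your proposal neither formulates nor executes.
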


\begin{corr} \label{thm:dreg}
For $\eps > 0$ and every fixed $\tu$ with respect to $n$
there exits a $d_0 = d_0(\eps)$ such that for $d \geq d_0$,
$$ \pr{\al^{\tu}(\G) \leq (2+\eps)\frlog} \to 1 \quad \text{as}\; n \to \infty\,.$$
\end{corr}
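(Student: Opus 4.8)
The plan is to deduce Corollary \ref{thm:dreg} directly from Theorem \ref{thm:RRGperc}, exploiting that the percolation constraint is monotone in the cluster-size threshold. The only genuine point is bookkeeping of quantifiers: the theorem is phrased for a threshold that grows linearly in $n$, whereas here $\tu$ is fixed, so I must first manufacture an admissible sequence $\{\eps_d\}$ and then check that a fixed-$\tu$ percolation set is automatically admissible for the larger, $n$-dependent threshold.

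First I would fix once and for all a sequence with $0<\eps_d\le 1$ and $\eps_d\to 0$ as $d\to\infty$ (for definiteness $\eps_d=\min\{1,1/\log\log(d+3)\}$ will serve). Given $\eps>0$, I apply Theorem \ref{thm:RRGperc} to this sequence to obtain $d_0=d_0(\eps,\{\eps_d\})$, and I take this same $d_0$ as the constant in the corollary; since the sequence was chosen beforehand, $d_0$ depends on $\eps$ alone, as required. Writing $\tau_n=\eps_d\frac{\log d}{d}\,n$, the theorem guarantees that for each fixed $d\ge d_0$, with high probability every induced subgraph of $\G$ whose components have size at most $\tau_n$ has at most $(2+\eps)\frac{\log d}{d}\,n$ vertices.

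Next I would invoke monotonicity. Because $\tau_n\to\infty$ as $n\to\infty$ (it is linear in $n$ with a fixed positive coefficient once $d$ is fixed), for all sufficiently large $n$ we have $\tu\le\lfloor\tau_n\rfloor$. Hence any percolation set with clusters of size at most $\tu$ is in particular a percolation set with clusters of size at most $\lfloor\tau_n\rfloor$, so the first family of sets is contained in the second and $\al^{\tu}(\G)\le\al^{\lfloor\tau_n\rfloor}(\G)$. On the high-probability event furnished by the theorem the right-hand side is at most $(2+\eps)\frac{\log d}{d}$, so the same bound holds for $\al^{\tu}(\G)$; letting $n\to\infty$ (and discarding the finitely many $n$ with $\tu>\lfloor\tau_n\rfloor$) yields $\pr{\al^{\tu}(\G)\le(2+\eps)\frlog}\to 1$ for every fixed $d\ge d_0$.

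I do not anticipate any real obstacle in this reduction: all of the analytic difficulty is already absorbed into Theorem \ref{thm:RRGperc}, and the only subtleties are the quantifier bookkeeping and verifying the containment of percolation families in the correct direction. If instead one wanted a self-contained argument for fixed $\tu$, I would run the first-moment method on the number of percolation sets of each size $s$, encoding each such set by a spanning forest of its components together with the event that these forest edges are present while no edge joins distinct components; for constant $\tu$ the relevant partition and tree weights are bounded, so this reproduces the independent-set threshold at density $2\frac{\log d}{d}$ without the entropy complications that arise when $\tu$ is allowed to grow. Finally, since $\G$ is the configuration model, the conclusion passes to a uniformly chosen simple $d$-regular graph by the remarks in Section \ref{sec:notation}, as conditioning on the simplicity of $\G$ (an event of probability bounded away from $0$) preserves high-probability statements.
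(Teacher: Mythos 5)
Your proposal is correct and is exactly the deduction the paper intends: the corollary is left as an immediate consequence of Theorem \ref{thm:RRGperc}, since for fixed $\tu$ and fixed $d$ the threshold $\tau_n = \eps_d\frac{\log d}{d}\,n$ eventually exceeds $\tu$, so monotonicity of the percolation constraint in the cluster-size threshold gives $\al^{\tu}(\G) \leq (2+\eps)\frac{\log d}{d}$ on the high-probability event of the theorem. Your handling of the quantifiers (fixing the sequence $\{\eps_d\}$ in advance so that $d_0$ depends on $\eps$ alone) is the right and necessary bookkeeping, and nothing further is needed.
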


It can be verified with careful bookkeeping in the proof of
Theorem \ref{thm:RRGperc} that for every such fixed $\tau$,
$\al^{\tu}(\G) \leq \frac{2(\log d +2 - \log 2)}{d}$ with high probability 
if $d \geq 12$. For Erd\H{o}s-R\'{e}nyi graphs we provide a weaker
but more explicit result.

\begin{thm} \label{thm:ERpercolation}
For $d \geq 5$, let $\tau = \log_d(n) - \log \log \log (n) - \log (\omega_n)$
where $\omega_n \to \infty$ with $n$. With high probability
any induced subgraph of $\ER$ with components of size at most $\tau$ has size at most
$$ \frac{2}{d} \left( \log d + 2 - \log 2 \right)n.$$
\end{thm}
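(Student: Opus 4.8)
The plan is a first moment computation over percolation sets of a single, carefully chosen size. Percolation sets with clusters of size at most $\tu$ are closed under taking subsets, since for $S' \subseteq S$ each component of $\ER[S']$ lies inside a component of $\ER[S]$; hence if some percolation set has size exceeding $\frac{2}{d}(\log d + 2 - \log 2)\,n$ then one of size exactly $s_0 := \lceil \frac{2}{d}(\log d + 2 - \log 2)\,n\rceil$ exists. Writing $p = d/n$ and $\rho = s/n$, and letting $N_s$ be the number of percolation sets of size $s$, the induced subgraph on any fixed $s$-set is distributed as $\mathrm{ER}(s,d/n)$, so $\E{N_s} = \binom{n}{s}\,P_s$ with $P_s := \pr{\mathrm{ER}(s,d/n)\ \text{has all components of size} \le \tu}$. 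It therefore suffices to show $\binom{n}{s_0}P_{s_0} \to 0$.

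The crux is bounding $P_s$. A naive union bound over partitions of the vertex set into blocks of size at most $\tu$ (requiring no inter-block edges) overcounts super-exponentially, because a sparse induced graph admits enormously many valid coarsenings; this loses a factor $e^{\Theta(s\log s)}$ and cannot work. Instead I would decompose $P_s$ \emph{exactly} along the genuine component partition $\mathcal{P} = \{B_1,\dots,B_k\}$, whose blocks have distinct and disjoint edge requirements:
\[ P_s = \sum_{\mathcal{P}:\, |B_i| \le \tu} (1-p)^{\binom{s}{2} - \sum_i \binom{t_i}{2}} \prod_i \pr{\mathrm{ER}(t_i,p)\ \text{is connected}}, \qquad t_i := |B_i|. \]
Each graph is counted once, so there is no overcounting. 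Bounding connectivity by the expected number of spanning trees, $\pr{\mathrm{ER}(t,p)\ \text{connected}} \le t^{t-2}p^{t-1}$ (Cayley's formula), factoring out $(1-p)^{\binom{s}{2}}$, and applying the exponential formula, the partition sum becomes $s!\,[x^s]\exp(W(x))$ with $W(x) = \sum_{t=1}^{\tu} w(t)\,x^t/t!$ and $w(t) = (1-p)^{-\binom{t}{2}}t^{t-2}p^{t-1}$.

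Now I would extract the coefficient by the Chernoff-type bound $[x^s]\exp(W(x)) \le x_0^{-s}\exp(W(x_0))$, valid for any $x_0 > 0$ since all coefficients are nonnegative, and choose $x_0 = 1/(ep)$, the radius of convergence of the tree generating function. The series then collapses:
\[ W(x_0) = \frac{1}{p}\sum_{t=1}^{\tu} (1-p)^{-\binom{t}{2}}\,\frac{t^{t-2}}{t!}\,e^{-t} \;\le\; \frac{(1-p)^{-\binom{\tu}{2}}}{p}\sum_{t \ge 1}\frac{t^{t-2}}{t!}\,e^{-t} \;=\; \frac{(1-p)^{-\binom{\tu}{2}}}{2p}, \]
where $\sum_{t\ge1} t^{t-2}e^{-t}/t! = 1/2$ is the value of the tree EGF at its singularity. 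Here the hypothesis that $\tu$ is small — of order $\log_d n \ll \sqrt{n/d}$ — enters decisively: it forces $(1-p)^{-\binom{\tu}{2}} \to 1$, so that $W(x_0) \le (1+o(1))\frac{n}{2d}$, and the precise form of $\tu$ (with the $\log\log\log n$ and $\omega_n$ corrections) supplies exactly the $o(1)$ slack needed for the explicit constant.

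Assembling the pieces with $\frac1n\log\binom{n}{s} \to h(\rho)+h(1-\rho)$, with $\frac1n\binom{s}{2}\log(1-p) \le -d\rho^2/2$, and with $\frac1n\log(s!/x_0^{s}) \to \rho\log(\rho d)$, the entropy and $\rho\log\rho$ terms cancel to leave $\frac1n\log\E{N_s} \le h(1-\rho) + \rho\log d - \frac{d\rho^2}{2} + \frac{1}{2d} + o(1)$; the bound $h(1-\rho) \le \rho - \rho^2/2$ from (\ref{eqn:hproperty}) then gives $\le \rho(1+\log d) - \frac{d\rho^2}{2} + \frac{1}{2d}$. Substituting $\rho = \frac{2}{d}(\log d + 2 - \log 2)$ makes this strictly negative for $d \ge 5$, the requirement reducing to $1 - \log 2 > \tfrac{1}{4c}$ with $c = \log d + 2 - \log 2$, which holds with room to spare; strict negativity of the exponential rate then forces $\E{N_{s_0}} \to 0$. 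The main obstacle is exactly the bound on $P_s$: suppressing the partition overcounting via the exact component decomposition, the spanning-tree first moment, and the generating-function Chernoff bound is what produces the correct exponential rate, and the remaining delicacy is the finite-$d$ bookkeeping that pins the clean constant $2-\log 2$ and the threshold $d\ge 5$.
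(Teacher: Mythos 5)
Your proposal is correct, but it takes a genuinely different route from the paper. The paper never decomposes over component partitions. Instead it counts short cycles in the ambient graph: by Lemma \ref{lem:cycle} and the specific choice $\tau = \log_d(n) - \log\log\log(n) - \log(\omega_n)$, the number $X_{n,\tau}$ of cycles of length at most $\tau$ satisfies $\E{X_{n,\tau}} = O(n/\omega_n)$, so by Markov one may condition on $X_{n,\tau} \leq \mu_n/\delta$ at the cost of $\delta$. On that event, any percolation set $S$ with clusters of size at most $\tau$ induces at most $|S| + \mu_n/\delta$ edges (delete one edge per short cycle and each cluster becomes a tree), and the first moment reduces to $\binom{n}{\al n}$ times the binomial lower tail $\pr{\mathrm{Bin}\big(\binom{\al n}{2}, d/n\big) \leq \al n + \mu_n/\delta}$, handled by Lemma \ref{lem:binomialbound}. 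Your route --- the exact decomposition along the true component partition, Cayley's spanning-tree bound $\pr{\mathrm{ER}(t,p)\ \text{connected}} \leq t^{t-2}p^{t-1}$, the exponential formula, and evaluation at the tree singularity $x_0 = 1/(ep)$ using $\sum_{t\geq 1} t^{t-2}e^{-t}/t! = 1/2$ --- is sound at every step I checked (the partition sum is exact, the coefficient bound is valid by nonnegativity, and the final rate $\rho(1+\log d) - \tfrac{d}{2}\rho^2 + \tfrac{1}{2d}$ is indeed strictly negative at $\rho = \tfrac{2}{d}(\log d + 2 - \log 2)$, the condition $1-\log 2 > \tfrac{1}{4c}$ holding easily). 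What each approach buys: yours needs only $(1-p)^{-\binom{\tau}{2}} = 1+o(1)$, i.e.\ $\tau = o(\sqrt{n/d})$, so it proves a strictly stronger theorem in which the cluster threshold may be polynomially large in $n$, avoids the $\delta$-conditioning entirely, and its linear term $\rho(1+\log d) + \tfrac{1}{2d}$ is slightly smaller than the paper's $\al(2 - \log 2 + \log d)$, so it would even yield a marginally better constant; the paper's argument, in exchange, is more elementary, using no generating functions and only a binomial tail estimate.

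One small mis-attribution worth fixing: you write that the $\log\log\log n$ and $\omega_n$ corrections in $\tau$ ``supply exactly the $o(1)$ slack needed'' in your argument. They do not --- in your argument these corrections are irrelevant, since any $\tau = O(\log n)$ makes $\binom{\tau}{2}p \to 0$; they are precisely what the \emph{paper's} cycle-counting route requires, to force $d^{\tau}\log\tau = O(n/\omega_n) = o(n)$. This does not affect correctness, as the theorem's $\tau$ comfortably satisfies your weaker hypothesis.
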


\begin{corr} \label{thm:ER}
If $\al_{\rm{ER}(d)} = \frac{2}{d}(\log d + 2 - \log 2)$
then for every fixed $\tu$ with respect to $n$,
$$ \pr{\al^{\tu}(\ER) \leq \al_{\rm{ER}(d)}} \to 1 \quad \text{as}\; n \to \infty\,.$$
\end{corr}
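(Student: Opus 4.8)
The plan is to derive Corollary \ref{thm:ER} directly from Theorem \ref{thm:ERpercolation} by a monotonicity argument, so that essentially no new probabilistic input is needed. The starting observation is that for any fixed graph $G$ the quantity $\al^{\tu}(G)$ is nondecreasing in $\tu$: if $\tu \leq \tu'$ then every percolation set with clusters of size at most $\tu$ is \emph{a fortiori} a percolation set with clusters of size at most $\tu'$, since relaxing the threshold only enlarges the admissible family of sets $S$. Hence $\al^{\tu}(G) \leq \al^{\tu'}(G)$ deterministically whenever $\tu \leq \tu'$.

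Next I would verify that the threshold in Theorem \ref{thm:ERpercolation}, namely
$$ \tu_n = \log_d(n) - \log\log\log(n) - \log(\omega_n), $$
can be arranged to diverge. Choosing $\omega_n \to \infty$ slowly (for instance $\omega_n = \log\log n$, which is permitted) keeps the subtracted corrections of lower order than the leading term $\log_d(n) = (\log n)/(\log d)$, so $\tu_n \to \infty$ as $n \to \infty$. Consequently, for any $\tu$ that is fixed with respect to $n$ there is an $N = N(\tu, d)$ with $\tu \leq \tu_n$ for all $n \geq N$. Because cluster sizes are integers, the comparison $\tu \leq \tu_n$ is all that is required even though $\tu_n$ itself need not be an integer.

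Combining the two observations, for fixed $\tu$ and every $n \geq N$ we obtain the deterministic bound $\al^{\tu}(\ER) \leq \al^{\tu_n}(\ER)$. Theorem \ref{thm:ERpercolation} asserts that with high probability every induced subgraph of $\ER$ whose components have size at most $\tu_n$ has at most $\frac{2}{d}(\log d + 2 - \log 2)\,n = \al_{\rm{ER}(d)}\,n$ vertices, i.e.\ $\al^{\tu_n}(\ER) \leq \al_{\rm{ER}(d)}$ with high probability. Chaining the inequalities then yields $\pr{\al^{\tu}(\ER) \leq \al_{\rm{ER}(d)}} \to 1$, which is exactly the assertion of the corollary.

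As for difficulty, there is no genuine obstacle at this stage: all the analytic and combinatorial work is carried out in the proof of Theorem \ref{thm:ERpercolation}, and the corollary is merely the specialization from a slowly growing threshold $\tu_n$ to an arbitrary constant one (exactly as Corollary \ref{thm:dreg} specializes Theorem \ref{thm:RRGperc} in the regular case). The only points needing a moment's care are checking that $\tu_n \to \infty$ despite the subtracted terms and noting that integrality of component sizes lets one disregard the non-integer nature of $\tu_n$; both are immediate.
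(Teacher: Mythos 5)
Your argument is correct and is exactly the paper's intended derivation: the corollary is stated as an immediate consequence of Theorem \ref{thm:ERpercolation}, using the monotonicity of $\al^{\tu}$ in $\tu$ together with the fact that the threshold $\tau_n = \log_d(n) - \log\log\log(n) - \log(\omega_n)$ tends to infinity, so any fixed $\tu$ is eventually dominated by it. Your two points of care (divergence of $\tau_n$ and integrality of cluster sizes) are the right ones, and nothing further is needed.
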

 
We provide another interpretation of Corollaries \ref{thm:dreg} and \ref{thm:ER}.
Bayati, Gamarnik and Tetali \cite{BGT} proved that the quantities $\al^{1}(\G)$
and $\al^{1}(\ER)$ converge almost surely to non-random limits as $n \to \infty$.
Their argument can be used to show that $\al^{\tu}(\G)$ and $\al^{\tu}(\ER)$ also
converge almost surely, as $n \to \infty$, to non-random limits $\al^{\tu}(d)$ and $\al^{\tu}(\mathrm{ER}(d))$,
respectively.

It is thus natural to consider the limiting values of
$\al^{\tu}_d$ and $\al^{\tu}(\rm{ER}(d))$ as $\tu \to \infty$. Define
$$\al^{\infty}(d) = \sup_{\tu} \al^{\tu}(d) \;\;\text{and}\;\; \al^{\infty}(\rm{ER}(d)) = \sup_{\tu} \al^{\tu}(\rm{ER}(d)).$$
In a sense these parameters determine the largest size density of percolation sets in $\G$
and $\ER$ whose components remain bounded as $n \to \infty$.
Corollaries \ref{thm:dreg} and \ref{thm:ER} along with the matching lower bound
of Frieze and {\L}uczak \cite{Fri, FL} imply that
\[ \lim_{d \to \infty} \frac{\al^{\infty}(d)}{(\log d)/d} = 2 \quad
\text{and} \quad \lim_{d \to \infty} \frac{\al^{\infty}(\rm{ER}(d))}{(\log d)/d} = 2.\]

We briefly discuss what is known about $\al^{\tu}(d)$ and $\al^{\infty}(d)$
for small values of $d$. For independent sets,
McKay \cite{McKay} proved that $\al^1(3) \leq 0.4554$ and this bound
was recently improved by Barbier et al.~\cite{BKZZ} to $\al^1(3) \leq 0.4509$.
Cs\'{o}ka et al.~\cite{CGHV} showed by way of randomized algorithms that
$\al^1(3) \geq 0.4361$ and this was improved to $\al^1(3) \geq 0.4375$
by Hoppen and Wormald \cite{HW}.

Hoppen and Wormald \cite{HW08} also provide a lower bound to the largest size
density of an induced forest in $\G$, and their construction can be used to get the same lower bound for
$\al^{\infty}(d)$. An upper bound to the density of induced forests was given by Bau et al.~\cite{BWZ}
with numerical values for small $d$. These upper bounds hold true for $\al^{\infty}(d)$ as well.
On the other hand it is known that $\al^{\infty}(3) = 3/4$ through results on the
fragmentability of graphs by Edwards and Farr \cite{EF}, and it is conjectured in \cite{BWZ}
that $\al^{\infty}(4) = 2/3$.
 
The question of the size density of the largest induced forests in $\G$ can also be treated
with the techniques used to prove Theorem \ref{thm:RRGperc}. The proof of the
theorem can be used with little modification to show that for large $d$, the size
density of the largest induced forests in $\G$ is also at most $(2+o(1))\frlog$ with high probability.
The same conclusion holds for the size density of the largest $k$-independent
sets in $\G$ for every fixed $k$. (A $k$-independent set is a subset of vertices such
that the induced subgraph has maximum degree $k$.)
 
We prove Theorem \ref{thm:RRGperc} in Section \ref{sec:regular} and Theorem \ref{thm:ERpercolation} in Section \ref{sec:ER}.

\section{Percolation on random regular graphs} \label{sec:regular}

The proof of Theorem \ref{thm:RRGperc} is based on the following two lemmas.
In the following we prove Theorem \ref{thm:RRGperc} by using these lemmas.
The lemmas are then proved in Section \ref{sec:ksparse} and Section \ref{sec:regulardensitybound}, respectively.

A finite (multi)-graph $H$ is $k$-sparse if
$|E(H)|/|V(H)| \leq k$, that is, the average degree of $H$ is at most $2k$.
For example, finite trees are 1-sparse.
Any subgraph of a $d$-regular graph is $(d/2)$-sparse.
The first lemma shows that linear sized subgraphs of a random $d$-regular graph
are likely to be $k$-sparse so long as their size density is sufficiently small.

\begin{lem} \label{lem:ksparse}
Let $\G$ be a random $d$-regular graph on $n$ vertices. Suppose $d \geq 12$ and $3.5 < k \leq (1- \frac{1}{\sqrt{2}})d$.
Set $C_{k,d} = e^{-4}(2k/d)^{1 + \frac{1}{k-1}}$. With high probability, any subgraph in $\G$ of size
at most $C_{k,d}\cdot n$ is $k$-sparse. The probability that this property fails in $\G$ is $O_{k,d}(n^{3.5-k})$.
\end{lem}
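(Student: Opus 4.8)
The plan is to prove the lemma by a first-moment union bound carried out directly in the configuration model. Since the densest subgraph on a given vertex set is the induced one, the sparsity property fails precisely when some vertex set $S$ with $|S|=s\le C_{k,d}\,n$ spans more than $ks$ edges of $\G$. Writing $e(S)$ for the number of edges with both endpoints in $S$, I would fix $S$ and control the violation through the minimal bad value $m=\lfloor ks\rfloor+1$, using the Markov-type bound $\pr{e(S)\ge m}\le\E{\binom{e(S)}{m}}$. Here $\E{\binom{e(S)}{m}}$ is exactly the expected number of $m$-element sets of half-edge-disjoint pairs inside $S$ that are realized by the random pairing, which by exchangeability of the vertices is the same for every $S$ of size $s$.

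This expectation factors cleanly in the configuration model. The number of ways to choose $m$ disjoint pairs among the $sd$ half-edges of $S$ is $\frac{(sd)!}{(sd-2m)!\,2^{m}\,m!}$, while the probability that a fixed such collection of $m$ pairs all appear in the random matching is $\frac{(nd-2m-1)!!}{(nd-1)!!}$. Multiplying by $\binom{n}{s}$ to range over the choice of $S$ and summing over $s$ bounds the failure probability by
\[
\sum_{s}\binom{n}{s}\,
\frac{(sd)!}{(sd-2m)!\,2^{m}\,m!}\,
\frac{(nd-2m-1)!!}{(nd-1)!!},
\qquad m=\lfloor ks\rfloor+1 .
\]
The entire problem is now the asymptotic estimation of this sum, and this is where I expect the main obstacle to lie. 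Applying Stirling's formula with $s=\beta n$ and $m=k\beta n$, one checks that the $n^{\pm m\log n}$ factors coming from $\binom{n}{s}$, from the partial-matching count, and from the double-factorial ratio cancel, leaving a clean exponential rate $n\,\psi(\beta)$. The function $\psi$ can be written through $h(x)=-x\log x$, and the estimates in \eqref{eqn:hproperty} (in particular $h(1-2k/d)\approx 2k/d-2k^{2}/d^{2}$) are what turn $\psi$ into closed form; I expect the hypothesis $k\le(1-\tfrac{1}{\sqrt2})d$ to be exactly what keeps the argument $2k/d$ in the range where these Taylor bounds are sharp enough and $\psi$ stays strictly negative throughout $(0,C_{k,d}]$. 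Setting (or safely under-estimating) the threshold $\psi(\beta)=0$ should then produce the cutoff $C_{k,d}=e^{-4}(2k/d)^{1+1/(k-1)}$, with the clean constant $e^{-4}$ reflecting a deliberate slack below the true critical density, so that all linear-sized ($\beta\le C_{k,d}$) terms are exponentially small.

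What remains is the regime of constant-order $s$, which is where the stated $O_{k,d}(n^{3.5-k})$ should come from. For fixed $s$ the summand is of polynomial order $n^{\,s-m}$ with $m=\lfloor ks\rfloor+1$, so every such term already decays once $k>1$, the smallest bad structures (single vertices carrying several loops) being the largest contributors; the hypothesis $k>3.5$ is precisely the condition guaranteeing that the resulting exponent is negative and that the whole small-$s$ tail is bounded by $O(n^{3.5-k})\to0$. Combining the exponentially small linear-sized bulk with this polynomially small small-$s$ tail yields the claimed failure probability $O_{k,d}(n^{3.5-k})$, and hence that with high probability every subgraph of $\G$ on at most $C_{k,d}\,n$ vertices is $k$-sparse. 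The delicate part throughout is the uniform bookkeeping needed to extract the exact constants $e^{-4}$, $3.5$, and $1-\tfrac{1}{\sqrt2}$ rather than merely an asymptotically decaying bound.
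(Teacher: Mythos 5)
Your proposal is correct in outline and lives inside the same first-moment framework as the paper, but it differs at one genuine technical juncture. The paper computes the exact configuration-model probability that a fixed $i$-set spans exactly $j$ edges (Lemma \ref{lem:zijexpectation}), reduces the sum over $j \geq ki$ to the single term $j = ki$ by a ratio/monotonicity argument (Lemma \ref{lem:Zijratio}), and then runs a Stirling--entropy analysis, proving that the resulting rate function in $\al = i/n$ is \emph{decreasing} on $(0, C_{k,d}]$ (Lemmas \ref{lem:regexpectation} and \ref{lem:decreasingent}) so that its value at $\al = 1/n$ controls every term uniformly. You instead bound the upper tail in one stroke via $\pr{e(S)\geq m}\leq \E{\binom{e(S)}{m}}$ with $m=\lfloor ks\rfloor+1$, i.e.\ by the expected number of $m$-matchings inside $S$, and your configuration-model formula for this expectation (the partial-matching count times $(nd-2m-1)!!/(nd-1)!!$) is correct. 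This eliminates both the sum over $j$ and the monotonicity lemma, and in fact the crude bounds $\binom{n}{s}\leq (en/s)^s$, $m!\geq (m/e)^m$ and $(nd-2m-1)!!/(nd-1)!!\leq (nd-2m)^{-m}$ already give a per-$s$ rate of the form $s\left[(k-1)\log(s/n)+k\log(d/2k)+O(k)\right]$, which is negative uniformly for $s/n\leq C_{k,d}$ precisely because $e^{-(k+1)/(k-1)}\geq e^{-4}$ for $k\geq 2$; this is arguably leaner than the paper's derivative computation. One thing your sketch should make explicit is uniformity over the intermediate regime $1\ll s\ll n$: your split into ``linear $s=\beta n$'' and ``constant-order $s$'' leaves that range formally uncovered, though the $s$-scaled rate just displayed handles it.

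One substantive mis-attribution: the exponent $3.5-k$ does \emph{not} come from the small-$s$ tail in the paper, nor would it in your scheme. In the paper, $n^{3.5-k}$ is the product of the entropy bound evaluated at $\al = 1/n$ (contributing $n^{1-k}$) with the prefactors $dn^2$ (number of $(i,j)$ terms) and $O(d\sqrt n)$ (Stirling corrections). In your scheme the fixed-$s$ terms decay like $n^{s-\lfloor ks\rfloor-1}\leq n^{-\lfloor k\rfloor}$, so any $k>1$ already kills the small-$s$ contributions, and your claim that $k>3.5$ is ``precisely the condition'' for the small-$s$ tail to be negative is wrong in your own framework; the threshold $3.5$ reflects the $n^{2.5}$ lost in the global union-bound and Stirling bookkeeping, not the smallest bad structures. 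This is a flaw of attribution rather than of the argument: carried out carefully, your route proves the stated $O_{k,d}(n^{3.5-k})$ bound (indeed something slightly stronger at small $s$) by essentially the same accounting.
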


The next  lemma shows that $k$-sparse subgraphs of $\G$ are actually not
very large if $k = o(\log d)$ and $d$ is sufficiently large.

\begin{lem} \label{lem:densitybound}
Let $\G$ be a random $d$-regular graph on $n$ vertices. Let $k = \eps_d \log d$ where $0 < \eps_d \leq 1$
and $\eps_d \to 0$ as $d \to \infty$. Given any $\eps > 0$ there is a $d_0 = d_0(\eps, \{\eps_d\})$ such
that if $d \geq d_0$, then with high probability any $k$-sparse induced subgraph of $\G$ has size at most
$$ (2 + \eps) \frac{\log d}{d}\,n\,.$$
\end{lem}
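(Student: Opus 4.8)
The plan is to bound the expected number of $k$-sparse induced subgraphs of a given size density $\alpha$ in $\mathcal{G}_{n,d}$, and then use a first-moment (union) argument to show that this expectation tends to zero once $\alpha$ exceeds $(2+\eps)(\log d)/d$. First I would set up the counting. Fix a target size $s = \alpha n$ and a sparsity budget: a $k$-sparse subgraph on $s$ vertices has at most $ks$ edges. So I would count configurations in which a designated set $S$ of $s$ vertices induces at most $ks$ edges. In the configuration model, the probability that a prescribed set of $m$ specific half-edge pairs are all internal to $S$ is computable via the $(nd-1)!!$ normalization, and summing over which $m \leq ks$ pairs land inside $S$ gives the probability that $G[S]$ has at most $ks$ edges. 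Multiplying by $\binom{n}{s}$ choices for $S$ yields the expected number of $k$-sparse sets of size $s$.

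The core of the argument is then an asymptotic estimate of this expectation using the entropy function $h(x) = -x \log x$ and its properties in (\ref{eqn:hproperty}). I would take logarithms and extract the leading exponential rate. The term $\binom{n}{s} \approx \exp(n\, h(\alpha) + n\, h(1-\alpha))$ contributes the combinatorial entropy of choosing $S$, while the half-edge pairing contributes a competing term that penalizes having many edges land inside a small set $S$: having roughly $ks$ internal edges out of $ds/2$ available internal half-edge slots forces a factor that behaves like $(\text{density of } S)$ raised to a power growing with the number of internal edges. The upshot is that the exponential rate per vertex takes the form
$$
h(\alpha) - \alpha \log \alpha \cdot (\text{something}) + k\alpha \cdot \log(\text{stuff}) + o(1),
$$
and with $k = \eps_d \log d$ and $\eps_d \to 0$, the $k$-dependent correction is negligible compared to the main $\alpha \log d$ term. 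I would show the rate is strictly negative for $\alpha = (2+\eps)(\log d)/d$ once $d$ is large, exactly as in the classical independent-set ($k = 0$) first-moment bound of Bollob\'as, with the sparsity relaxation only shifting the threshold by a factor controlled by $\eps_d$.

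Having shown that the per-vertex exponential rate is bounded above by a negative constant uniformly over $\alpha \geq (2+\eps)(\log d)/d$, I would sum (union bound) over the linearly many admissible sizes $s$ to conclude that the expected number of $k$-sparse induced subgraphs of size at least $(2+\eps)(\log d)/d \cdot n$ is $o(1)$, so by Markov's inequality no such subgraph exists with high probability. By the simple-graph transfer remark in Section \ref{sec:notation}, this also holds for the uniform simple $d$-regular graph.

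The main obstacle will be the asymptotic estimate in the second step: I must track the entropy contributions carefully enough to see that the $k$-sparsity slack, rather than the clean $k=0$ edge count, does not destroy the negativity of the rate. Concretely, the danger is that allowing up to $ks$ edges inside $S$ (instead of forcing $S$ independent) enlarges the exponent by a term of order $k\alpha \log(1/\alpha) \sim \eps_d (\log d) \cdot \alpha \log(d/\log d)$, which is of the same order $\alpha (\log d)^2 \eps_d$ as could overwhelm the main gap if $\eps_d$ decays too slowly. The delicate point is to verify, using $\eps_d \to 0$, that this correction is genuinely $o(\alpha \log d)$ and hence absorbable into the $\eps$ in the bound; this is precisely where the hypothesis $k = \eps_d \log d$ with $\eps_d \to 0$ is essential, and where the choice of $d_0$ depending on both $\eps$ and the sequence $\{\eps_d\}$ enters.
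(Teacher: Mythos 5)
Your overall strategy---a first moment bound over vertex sets of size $\al n$ inducing at most $k\al n$ edges in the configuration model, followed by Stirling/entropy asymptotics of the exponential rate---is exactly the paper's route (there $\E{Z_j}$ is computed exactly in Lemma \ref{lem:zijexpectation}, the dominant edge count is located at $j = k\al n$ in Lemma \ref{lem:maxexpectation}, and the rate is expanded in the variable $\beta$ with $\al = \beta\frlog$). However, there is a genuine gap precisely at the point you flag as the main obstacle, and the resolution you propose is a non sequitur. You estimate the sparsity correction as a single term of order $k\al\log(1/\al) \asymp \eps_d\,\al\log^2 d$ and assert that $\eps_d \to 0$ makes it $o(\al\log d)$. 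That inference is false: $\eps_d\,\al\log^2 d = (\eps_d\log d)\,\al\log d = k\,\al\log d$, and nothing in the hypotheses forces $k = \eps_d \log d \to 0$ (take $\eps_d = 1/\log\log d$); worse, in the intended application inside Theorem \ref{thm:RRGperc} one must take $k \geq 4$ because Lemma \ref{lem:ksparse} requires $k > 3.5$. So the correction, at the order you assign it, is at least $4\al\log d$ and swamps the gap of order $\eps\,\al\log d$ left by the main terms $h(\al) + \al - (d/2)\al^2$; no decay of $\eps_d$ alone can save a term of that size.

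The missing idea is a cancellation. In the exact exponent the edge-placement entropy contributes $-k\,h(\al) \approx -k\al\log(1/\al)$ with the \emph{opposite} sign to the pairing-cost term $k\al\log(d/2k)$, and only the combination
\begin{equation*}
k\al\bigl[\log(d/2k) - \log(1/\al)\bigr] \;=\; k\al\log\Bigl(\frac{d\al}{2k}\Bigr)
\end{equation*}
survives; this is visible in the passage from (\ref{eqn:entropyasymp}) to (\ref{eqn:entropyasymp2}). On the relevant scale $\al = \beta\frlog$ with $\beta$ bounded and $k = \eps_d\log d$, one has $d\al/k = \beta/\eps_d$, so the surviving correction is of order $\eps_d\log(1/\eps_d)\,\al\log d$, and it is $\eps_d\log(1/\eps_d) \to 0$---not $\eps_d\log d \to 0$---that the hypothesis $\eps_d \to 0$ actually delivers; this yields $\beta \leq 2 + \delta(d)$ with $\delta(d) \to 0$. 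Without tracking this cancellation your rate is not negative at $\beta = 2 + \eps$ and the first-moment argument fails. Two smaller, repairable omissions: you should justify that the sum over $0 \leq j \leq k\al n$ is dominated by the boundary term $j = k\al n$ (this holds because $\al > 2k/d$ on the relevant range, as in Lemma \ref{lem:maxexpectation}); and your union bound over all sizes $s \geq (2+\eps)\frlog\, n$ needs the rate to be negative uniformly up to the maximal possible density of $k$-sparse sets, which the paper secures through the definition of $\al_d$ as a supremum together with the separate argument that $\al_d \to 0$ as $d \to \infty$.
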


We do not attempt to provide explicit upper bounds on $d_0$.

\paragraph{\textbf{Proof of Theorem} \ref{thm:RRGperc}}

Let $\eps_d$ be as in the statement of the theorem.
First we show that it is possible to choose $k \geq 4$
satisfying both the constraints that $k = o(\log d)$ and
$e^{-4} (2k/d)^{1 + 1/(k-1)} \geq \eps_d \frac{\log d}{d}$ for all large $d$.
Let $\eps'_d = \max \{ \frac{4}{|\log(\eps'_d)|}, \frac{4}{\log d} \}$.
Note that $\eps'_d \to 0$ as $d \to \infty$. We assume that $d$ is large enough
that $\eps_d \leq e^{-6}$. Set $k = \eps'_d \log d$.

We begin by showing that $e^{-4}(2k/d)^{1+ 1/(k-1)} \geq \eps_d \frac{\log d}{d}$
for all large $d$. As $(2k/d) \leq 1$ we have
$$(\frac{2k}{d})^{1+ 1/(k-1)} \geq (\frac{2k}{d})^{1 + 2/k} \geq (\frac{k}{d})^{1 + 2/k} \geq
\big (\frac{\eps'_d \log d}{d} \big)^{1 + \frac{2}{\eps'_d \log d}}.$$

We now show that $(\frac{\eps'_d \log d}{d})^{\frac{2}{\eps'_d \log d}} \geq \eps_d^{1/2}$,
which would imply that the very last term above is greater than $(\eps'_d \eps_d^{1/2}) \frac{\log d}{d}$.
Observe that

\begin{align*}
\log \Big ( (\frac{\eps'_d \log d}{d})^{\frac{2}{\eps'_d \log d}}\Big) & = \frac{2}{\eps'_d} \Big( \frac{\log(\eps'_d) + \log\log d}{\log d} - 1\Big)\\
& \geq \frac{2}{\eps'_d} \Big ( \frac{\log 4}{\log d} - 1 \Big) \quad (\text{as}\; \eps'_d \geq \frac{4}{\log d}) \\
& \geq \frac{-2}{\eps'_d}.
\end{align*}

We conclude that $(\frac{\eps'_d \log d}{d})^{\frac{2}{\eps'_d \log d}} \geq e^{-2/\eps'_d}$,
and as $\eps'_d \geq \frac{4}{|\log (\eps_d)|}$, we deduce that
$e^{-2/\eps'_d} \geq e^{-\frac{1}{2}|\log(\eps_d)|} = \eps_d^{1/2}$.

So far we have seen that $(2k/d)^{1 + 1/(k-1)} \geq (\eps'_d \eps_d^{1/2}) \frac{\log d}{d}$
for all large $d$ (large $d$ is required to ensure that $\eps'_d \leq 1$). Now we show that
$e^{-4} \eps'_d \, \eps_d^{1/2} \geq \eps_d$ for perhaps larger $d$.
As $\eps'_d \geq \frac{4}{|\log(\eps_d)|}$, this holds if $|\log(\eps_d)| \leq 4e^{-4} \eps_d^{-1/2}$.
Since $\eps_d \leq 1$, this inequality is the same as $\log(\eps_d^{-1}) \leq 4e^{-4}\eps_d^{-1/2}$.
A simple calculation shows that $\log(x) \leq 4e^{-4} x^{1/2}$ if $x \geq e^8/4$.
Therefore, $e^{-4} \eps'_d \, \eps_d^{1/2} \geq \eps_d$ whenever $\eps_d \leq 4e^{-8}$.
The latter certainly holds for large $d$.

We have thus concluded that it is possible to choose $k \geq 4$ satisfying both the constraints
that $k = o(\log d)$ and $e^{-4} (2k/d)^{1 + 1/(k-1)} \geq \eps_d \frac{\log d}{d}$ for all large $d$.
We are now able to finish the proof. Set $k = \eps'_d \log d$ in the following.

Let $A = A(\G)$ be the event that all subgraphs of $\G$ containing at most $\eps_d \frac{\log d}{d} n$
vertices are $k$-sparse. From the conclusion derived above we see that there exists $d_1$ such that
if $d \geq d_1$ then $k/d < 1 - 1/\sqrt{2}$ and $e^{-4}(2k/d)^{1 + 1/(k-1)} \geq \eps_d \frac{\log d}{d}$. 
Lemma \ref{lem:ksparse} implies that $\pr{A} \to 1$ as $n \to \infty$.

Let $B = B(\G)$ be the event that any induced subgraph of $\G$ that is $k$-sparse
contains at most $(2+ \eps) \frac{\log d}{d}\,n$ vertices. From Lemma \ref{lem:densitybound}
we conclude that there exists a $d_2$ such that if $d \geq d_2$ then $\pr{B} \to 1$ as $n \to \infty$.

If $d \geq \max \{d_1,d_2\}$ then $\pr{A\cap B} \geq \pr{A} + \pr{B} - 1 \to 1$ as $n \to \infty$.
Let $D = D(\G)$ be the event that all induced subgraphs of $\G$ with components of size
at most $\tau = \eps_d \frac{\log d}{d}\,n$ have size at most $(2 + \eps) \frac{\log d}{d} \, n$.
We show that $A \cap B \subset D$ for all $d \geq \max \{ d_1, d_2\}$.

Suppose a $d$-regular graph $G$ on $n$ vertices satisfies properties $A$ and $B$.
If $S \subset V(G)$ induces a subgraph with components of size at most $\tau = \eps_d \frac{\log d}{d} n$
then all components of $S$ are $k$-sparse because $G$ satisfies property $A$.
Hence, $S$ itself induces a $k$-sparse subgraph. As $G$ also satisfies property $B$
we deduce that $S$ contains at most $(2 + \eps) \frac{\log d}{d}\, n$ vertices.
This means that $G$ satisfies property $D$, as required.

The proof of Theorem \ref{thm:RRGperc} is now complete because if
$d \geq \{d_1,d_2\}$ then $\pr{D} \geq \pr{A \cap B} \to 1$ as $n \to \infty$.

\subsection{Proof of Lemma \ref{lem:ksparse}} \label{sec:ksparse}

We prove Lemma \ref{lem:ksparse} by showing that the expected number subgraphs of $\G$
that are of size at most $C_{k,d}\cdot n$ and that are not $k$-sparse is vanishingly small
as $n \to \infty$. The first moment bound implies that the probability is vanishingly small as well.

Let $Z_{i,j} = Z_{i,j}(\G)$ be the number of subsets $S \subset V(\G)$ such that $|S| = i$ and $e(S) = j$.
Notice that $Z_{i,j} = 0$ unless $j \leq (d/2)i$.

Let $N$ be the number of subgraphs of $\G$ that have size at most $C_{k,d}\cdot n$ and that
are not $k$-sparse. We have
\begin{equation} \label{eqn:Nformula}
N = \sum_{i=1}^{C_{k,d}n} \sum_{j = ki}^{(d/2)i} Z_{i,j}\,.
\end{equation}

In the following sequence of lemmas we compute $\E{Z_{i,j}}$ in order to bound to $\E{N}$.

\begin{lem} \label{lem:zijexpectation}
For $1 \leq i \leq n$ and $0 \leq j \leq (d/2)i$, the expectation of $Z_{i,j}$ is
\begin{equation}  \label{eqn:zijexpectation}
\E{Z_{i,j}} = \binom{n}{i} \times \dfrac{(id)! \; \big((n-i)d\big)! \; (nd/2)! \; 2^{id -2j}}{(id -2j)!\; j!\; \Big(\frac{nd}{2} -id + j\Big)! \; (nd)!}.
\end{equation}
\end{lem}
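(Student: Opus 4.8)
The plan is to compute $\pr{e(S)=j}$ for a single fixed subset $S$ with $|S| = i$ and then invoke linearity of expectation. Writing $Z_{i,j} = \sum_{|S| = i} \mathbf{1}\{e(S) = j\}$ and observing that, by the symmetry of the configuration model under relabelling the vertices, the probability $\pr{e(S)=j}$ depends only on $i$ and $j$, we obtain $\E{Z_{i,j}} = \binom{n}{i}\,\pr{e(S)=j}$ for any fixed $S$ of size $i$. Since every pairing of the $nd$ half-edges is equally likely and there are $(nd-1)!! = (nd)!/(2^{nd/2}(nd/2)!)$ of them, it suffices to count those pairings in which exactly $j$ pairs are internal to $S$.

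Fix such an $S$. It emits $id$ half-edges while its complement emits $(n-i)d = nd - id$ half-edges; here $e(S) = j$ means that exactly $j$ of the chosen pairs join two half-edges of $S$ (loops and multiple edges are counted, consistent with working in the multigraph). I would build each favourable configuration in three stages. First, select $2j$ of the $id$ half-edges of $S$ and pair them into the $j$ internal edges, which can be done in $\binom{id}{2j}(2j-1)!! = (id)!/\big((id-2j)!\,j!\,2^j\big)$ ways. Second, match each of the remaining $id - 2j$ half-edges of $S$ to a distinct half-edge of the complement (the crossing edges); assigning partners one at a time gives $((n-i)d)!/(nd - 2id + 2j)!$ ways. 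Third, perfectly pair the $nd - 2id + 2j$ leftover complement half-edges among themselves, contributing $(nd-2id+2j-1)!! = (nd-2id+2j)!/\big(2^{nd/2 - id + j}(nd/2 - id + j)!\big)$ ways. These stages act on disjoint sets of half-edges, so their counts multiply, and the factor $(nd - 2id + 2j)!$ cancels between the second and third stages.

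Dividing the resulting product by $(nd-1)!!$ gives $\pr{e(S)=j}$, and multiplying by $\binom{n}{i}$ yields (\ref{eqn:zijexpectation}). The one point requiring care is the bookkeeping of the powers of two: the factors $2^{-j}$ from the first stage and $2^{-(nd/2 - id + j)}$ from the third combine with the $2^{nd/2}$ arising from $(nd-1)!! = (nd)!/(2^{nd/2}(nd/2)!)$ to produce exactly the $2^{id - 2j}$ appearing in the statement. I would also record the implicit feasibility constraints $0 \le 2j \le id$ and $id - 2j \le (n-i)d$, under which all factorials are well defined and outside of which both sides vanish. I expect the combinatorial decomposition itself to be routine, so the only genuine obstacle is verifying that the three stages neither double count nor omit configurations; in particular, treating the $id - 2j$ unmatched $S$-half-edges as an ordered list in the second stage introduces no overcounting precisely because the half-edges are distinguishable and each receives a unique partner.
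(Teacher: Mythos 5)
Your proposal is correct and essentially identical to the paper's proof: both compute $\E{Z_{i,j}} = \binom{n}{i}\,\pr{e(S)=j}$ by symmetry, count the favourable pairings via the same three-stage decomposition (your $\binom{id}{2j}(2j-1)!!$ for the internal pairs, then crossing matches contributing $\binom{(n-i)d}{id-2j}(id-2j)!$, then the double factorial for the leftover complement half-edges), divide by $(nd-1)!!$, and simplify with $(m-1)!! = m!/(2^{m/2}(m/2)!)$. Your power-of-two bookkeeping yielding $2^{id-2j}$ checks out and matches (\ref{eqn:zijexpectation}).
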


\begin{proof}
There are $\binom{n}{i}$ subsets $S$ of size $i$ and $\E{Z_{i,j}}$ is the sum over each such $S$
of the probability that $e(S) = j$. For a fixed subset $S$ of size $i$, the probability that
$e(S) = j$ is the number of pairings in the configuration model that satisfy $e(S)=j$ divided by $(nd-1)!!$.
The number of such pairings is
\[ \binom{id}{id - 2j} \binom{(n-i)d}{id -2j} \; (id -2j)! \; (2j-1)!! \; \Big((n-2i)d +2j -1\Big)!!\,.\]
Therefore, $\E{Z_{i,j}}$ equals
\begin{equation} \label{eqn:Zijformula}
\E{Z_{i,j}} = \binom{n}{i} \times \dfrac{\binom{id}{id - 2j} \binom{(n-i)d}{id -2j} \; (id -2j)! \; (2j-1)!! \; \Big((n-2i)d +2j -1\Big)!!}{(nd-1)!!}.
\end{equation}

We may simplify (\ref{eqn:Zijformula}) by using $(m-1)!! = \frac{m!}{2^{m/2}(m/2)!}$
for even integers $m \geq 2$ and $0!! = 1$. This simplification leads to (\ref{eqn:zijexpectation}).
\end{proof}

\begin{lem} \label{lem:Zijratio}
Suppose $1 \leq k \leq d/2$ and $1 \leq i \leq (2k/d)n$. For $ki \leq j \leq (d/2)i$, $\E{Z_{i,j}}$ is
maximized at $j = ki$.
\end{lem}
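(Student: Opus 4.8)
I want to show that for fixed $i$ the quantity $\E{Z_{i,j}}$, viewed as a function of the integer $j$ on the range $ki \le j \le (d/2)i$, is decreasing, so that its maximum over this range is attained at the left endpoint $j = ki$.  The natural tool is to examine the ratio of consecutive terms $R(j) = \E{Z_{i,j+1}}/\E{Z_{i,j}}$ and prove that $R(j) \le 1$ throughout the range.  Once the ratio is shown to be at most $1$ for every $j$ with $ki \le j < (d/2)i$, the sequence is non-increasing on $[ki, (d/2)i]$ and the claimed maximum at $j = ki$ follows immediately.

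**Carrying it out.**  First I would use the closed form in Lemma \ref{lem:zijexpectation}.  Since $\binom{n}{i}$ and the three fixed factorials $(id)!$, $((n-i)d)!$, $(nd/2)!$, and $(nd)!$ do not depend on $j$, they cancel in the ratio $R(j)$, leaving only the $j$-dependent factors $(id-2j)!\,$, $j!$, $\big(\tfrac{nd}{2}-id+j\big)!$ and the power $2^{id-2j}$.  Replacing $j$ by $j+1$ shifts each of these in a controlled way, and I expect to obtain the clean expression
\begin{equation} \label{eqn:Rratio}
R(j) = \frac{\E{Z_{i,j+1}}}{\E{Z_{i,j}}}
= \frac{(id - 2j)(id - 2j - 1)}{4\,(j+1)\,\big(\tfrac{nd}{2} - id + j + 1\big)}.
\end{equation}
The combinatorial meaning is transparent: moving one more internal pairing into $S$ gains a factor from choosing two of the remaining $id - 2j$ free half-edges of $S$ and loses factors from the growing number $j$ of internal pairs and from the shrinking pool of ``outside'' pairings.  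Next I would bound the numerator and denominator of (\ref{eqn:Rratio}) on the relevant range.  The numerator is at most $(id)^2$, while for the denominator I use the hypotheses $j \ge ki$ (so $j+1 > ki$) and $i \le (2k/d)n$ (which forces $\tfrac{nd}{2} - id + j + 1 \ge \tfrac{nd}{2} - id$ to stay positive and comparable to $nd/2$), giving a denominator of order $k i \cdot n d$.  Comparing, $R(j) \lesssim (id)^2 / (ki\cdot nd) = id/(kn)$, and the constraint $i \le (2k/d)n$ makes this at most a fixed constant below $1$; a careful version of this estimate delivers $R(j) \le 1$ on the whole range.

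**The main obstacle.**  The delicate point is not the algebra of the ratio but controlling the denominator factor $\tfrac{nd}{2} - id + j + 1$ uniformly: one must confirm it stays bounded away from zero (indeed comparable to $nd/2$) for all admissible $i$ and $j$, which is exactly where the hypothesis $i \le (2k/d)n$ and the bound $k \le d/2$ are used, since they guarantee $id \le kn \le (nd/2)$.  I would therefore handle the worst case $j = (d/2)i - 1$ (largest $j$, smallest outside pool) separately to check positivity, and then verify that the constant emerging from $R(j) \le id/(kn) \le 2$ can be tightened to $\le 1$ by keeping the factor of $2k/d$ rather than discarding it.  With that uniform lower bound on the denominator in hand, the monotonicity $R(j) \le 1$ is immediate and the lemma follows.
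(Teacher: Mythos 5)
Your overall strategy --- computing the ratio $R(j)=\E{Z_{i,j+1}}/\E{Z_{i,j}}$ from Lemma \ref{lem:zijexpectation} and showing $R(j)\le 1$ for $ki \le j < (d/2)i$ --- is exactly the paper's, and your displayed ratio formula is correct (note $\tfrac{nd}{2}-id+j+1 = \tfrac{(n-2i)d}{2}+j+1$, matching the paper's expression). The gap is in the estimation step. First, your claim that $\tfrac{nd}{2}-id+j+1$ stays ``comparable to $nd/2$'' is false on the allowed range: this factor is \emph{increasing} in $j$, so its worst case is at $j=ki$, not at $j=(d/2)i-1$ as you assert (you have the monotonicity backwards), and there it equals $\tfrac{nd}{2}-i(d-k)+1 \ge \tfrac{nd}{2}\bigl(1-\tfrac{2k}{d}\bigr)^2+1$, which degenerates as $k \to d/2$; also the hypothesis gives $id \le 2kn$, not $id \le kn$. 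Second, and more fundamentally, no argument that loses constant factors can close the lemma: at the extreme point $j=ki$ with $i=(2k/d)n$, the numerator and denominator of $R(j)$ agree to leading order (try $k=d/3$, $i=\tfrac{2n}{3}$, $j=ki$: both are $\approx \tfrac{4d^2n^2}{81}$), so $R(j)=1-o(1)$ there. Hence your bound $R(j)\lesssim id/(kn)\le 2$, or any bound of the form ``a fixed constant below $1$,'' cannot be rescued by more careful bookkeeping of comparability constants; the inequality $R(j)\le 1$ is tight.

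The paper avoids this by an exact comparison rather than separate crude bounds: subtracting the denominator of $R(j)$ from its numerator gives $id(id-1)-2(n-2i)d-4-2j(nd+3)$, which is decreasing in $j$, so non-positivity on the whole range reduces to the single inequality $ki \ge \bigl[\tfrac12 id(id-1)-(n-2i)d-2\bigr]/(nd+3)$; since the right-hand side is at most $(id)^2/(2nd)$, it suffices that $k \ge id/(2n)$, which is precisely the hypothesis $i \le (2k/d)n$ --- with equality exactly in the borderline case where $R(j)=1-o(1)$, explaining why the exact cancellation is needed. If you replace your bounding paragraph with this exact subtraction, your proof goes through; as written, the estimates do not yield $R(j)\le 1$ for all $k \le d/2$.
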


\begin{proof}
From the equation for $\E{Z_{i,j}}$ in (\ref{eqn:zijexpectation}) we deduce that the ratio
\[\frac{\E{Z_{i,j+1}}}{\E{Z_{i,j}}} = \dfrac{(id -2j-1)(id -2j)}{4 (j+1)\Big(\frac{(n-2i)d}{2} + j + 1\Big)}\,.\]

If $i \leq (2k/d)n$ then this ratio is at most 1 provided that $ki \leq j \leq (id)/2$.
Indeed, subtracting the denominator from the numerator gives
$id(id-1) -2(n-2i)d - 4 - 2j(nd + 3)$. This is non-positive for all $ki \leq j \leq (id)/2$ if and only if
\begin{equation} \label{eqn:ratiobound} ki \geq \dfrac{\frac{1}{2}(id)(id -1) - (n-2i)d - 2}{nd + 3}\,.\end{equation}

In order to show that (\ref{eqn:ratiobound}) holds for $1 \leq i \leq (2k/d)n$ it suffices to show that
$ki \geq \frac{(id)^2}{2nd}$ because the latter term is larger than the right hand side
of inequality (\ref{eqn:ratiobound}). Since $i \geq 1$, $ki \geq \frac{(id)^2}{2nd}$ if and
only if $k \geq \frac{id}{2n}$, which is indeed assumed.
\end{proof}

It follows from Lemma \ref{lem:Zijratio} and (\ref{eqn:Nformula}) that
\begin{align}
\E{N} &\leq \sum_{i=1}^{C_{k,d}n} (id/2) \E{Z_{i,ki}} \nonumber \\
& \leq dn^2 \max_{1 \leq i \leq C_{k,d}n} \E{Z_{i,ki}}. \label{eqn:Nbound}
\end{align}

To get a bound on $\E{Z_{i,ki}}$ that is suitable for asymptotic analysis we introduce some notation.
For a graph $G$ and subsets $S,T \subset V(G)$ let
$$m(S,T) = \dfrac{\left |(u,v): u \in S, v \in T, \{u,v\} \in E(G) \right|}{2|E(G)|} \,.$$
The \emph{edge profile} of $S$ associated to $G$ is the $2 \times 2$ matrix
$$
M(S) = \left [ \begin{array}{cc}
m(S,S) & m(S,S^{c}) \\
m(S^{c},S) & m(S^{c},S^{c})
\end{array} \right ]
$$
where $S^{c} = S \setminus V(G)$.
If $|S| = i$ and $e(S) = j$ then
\begin{equation} \label{eqn:edgeprofile}
M(S) = \left [ \begin{array}{cc}
\frac{2j}{nd} & \frac{i}{n} - \frac{2j}{nd} \\
 \frac{i}{n} - \frac{2j}{nd} & 1 - 2\frac{i}{n} + \frac{2j}{nd}
\end{array} \right ]
\end{equation}

We denote the matrix in the r.h.s.~of (\ref{eqn:edgeprofile}) by $M(i/n,j/(nd))$. Then
$Z_{i,j}$ is the number of $S \subset V(\G)$ such that $M(S) = M(i/n,j/(nd))$.
The \emph{entropy} of a finitely supported probability distribution $\pi$ is
\begin{equation} \label{eqn:entropydef}
H(\pi) = \sum_{x \in {\rm support}(\pi)} -\pi(x)\log \pi(x)\,.
\end{equation}

\begin{lem} \label{lem:regexpectation}
For $1 \leq i \leq n-1$ and $0 \leq j \leq id/2$, we have that
$$\E{Z_{i,j}} \leq O(d \sqrt{n}) \times \exp{\left \{ n \left [ \frac{d}{2}H\Big(M(i/n,j/(nd))\Big) - (d-1)H\big(i/n,1- (i/n)\big) \right ]\right \}}$$
where big O constant is universal.
\end{lem}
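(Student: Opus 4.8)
The plan is to substitute the exact expression for $\E{Z_{i,j}}$ from (\ref{eqn:zijexpectation}) into one-sided Stirling bounds and read off both the exponential rate and the polynomial prefactor. Concretely, I would use $m! \le 3\sqrt{m}\,(m/e)^m$ on the factorials appearing in the numerator of (\ref{eqn:zijexpectation}), namely $n!$, $(id)!$, $((n-i)d)!$ and $(nd/2)!$, and the matching lower bound $m! \ge \sqrt{2\pi m}\,(m/e)^m$ on those in the denominator, namely $i!$, $(n-i)!$, $(id-2j)!$, $j!$, $(nd/2-id+j)!$ and $(nd)!$. Because the $(m/e)^m$ factor does not depend on which one-sided bound is used, the exponential rate is determined exactly; the choice of inequality only affects the universal constant and the $\sqrt{\cdot}$ prefactor.

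For the rate I would take logarithms and organise the $m\log m - m$ contributions. Writing $a = i/n$ and $b = 2j/(nd)$, so that the entries of $M(i/n,j/(nd))$ in (\ref{eqn:edgeprofile}) are $b$, $a-b$, $a-b$ and $1-2a+b$, three cancellations occur in turn. First, the linear $-m$ terms cancel between numerator and denominator. Second, after expanding $(cnd)\log(cnd) = cnd\log(nd) + cnd\log c$, the coefficients of the scale term $\log(nd)$ sum to $3/2$ on each side and cancel. Third -- and this is where the factor $2^{id-2j}$ is indispensable -- the residual $\log 2$ contributions coming from $(nd/2)!$, from $j!$ and $(nd/2-id+j)!$, and from $2^{id-2j}$ itself have coefficients summing to zero. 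What survives are the terms $cnd\log c$, which, via $h(x) = -x\log x$ and the identity $h(xy) = xh(y) + yh(x)$ of (\ref{eqn:hproperty}), assemble into $n\tfrac{d}{2}\big[h(b)+2h(a-b)+h(1-2a+b)\big] = n\tfrac{d}{2}H\!\left(M(i/n,j/(nd))\right)$. Separately, the binomial contributes $n[h(a)+h(1-a)]$; pairing its $a\log a$ and $(1-a)\log(1-a)$ parts against the matching contributions of exactly the two factorials $(id)!$ and $((n-i)d)!$ converts the naive coefficient $d$ into $d-1$, producing $-n(d-1)[h(a)+h(1-a)] = -n(d-1)H(i/n,1-i/n)$. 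This is precisely the claimed exponent.

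For the prefactor I would retain the $\sqrt{2\pi m}$ corrections. A direct simplification of the resulting ratio of square roots shows that the net polynomial factor is, up to a universal constant, $d\sqrt{n}\big/\sqrt{(id-2j)\,j\,(nd/2-id+j)}$. Each of the three integers $id-2j$, $j$ and $nd/2-id+j$ is at least $1$ when nonzero, so for $j$ in the interior range this factor is $O(\sqrt d)$, and in all cases it is at most a constant times $d\sqrt{n}$; in the finitely many degenerate corners where one of these arguments vanishes the corresponding factorial equals $1$ and a direct estimate from (\ref{eqn:zijexpectation}) yields the same $O(d\sqrt n)$ bound. This establishes the stated (deliberately generous) prefactor.

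The main obstacle is the bookkeeping in the second paragraph: checking that the scale terms and, above all, the $\log 2$ terms cancel exactly, and that the surviving $cnd\log c$ terms regroup into the edge-profile entropy $H(M)$ rather than some nearby quantity. The subtle point is the appearance of the coefficient $d-1$ (not $d$) in front of $H(i/n,1-i/n)$, which hinges on pairing $\binom{n}{i}$ against precisely $(id)!$ and $((n-i)d)!$; everything else reduces to routine one-sided Stirling estimation.
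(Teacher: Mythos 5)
Your proposal is correct and takes essentially the same approach as the paper: one-sided Stirling bounds applied directly to (\ref{eqn:zijexpectation}), with the exponential part assembling into $\frac{d}{2}H\big(M(i/n,j/(nd))\big)-(d-1)H(i/n,1-i/n)$ (the $d-1$ arising exactly as you say, from $\binom{n}{i}$ against $(id)!$ and $((n-i)d)!$) and the polynomial prefactor $d\sqrt{n}\big/\sqrt{(id-2j)\,j\,(nd/2-id+j)}$ bounded by $O(d\sqrt{n})$, with vanishing factorial arguments treated via $0!=1$ just as the paper does. The cancellation bookkeeping you spell out (the linear terms, the $\log(nd)$ scale terms, and the $\log 2$ terms absorbed by the factor $2^{id-2j}$) is left implicit in the paper but matches its computation exactly.
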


\begin{proof}
We use Stirling's approximation of $m!$ to simplify (\ref{eqn:zijexpectation}):
$$1 \leq \dfrac{m!}{\sqrt{2 \pi m} (m/e)^m} \leq e^{1/12m}.$$
First, consider $\binom{n}{\al n}$. For $1 \leq i \leq n-1$, Stirling's approximation
shows that $\binom{n}{i} \leq \sqrt{n/i(n-i)} \,e^{nH(i/n,1-i/n)}$. Since $n/i(n-i) \leq n/(n-1) \leq 2$
and $H(0,1) = H(1,0) = 0$, we conclude that $\binom{n}{\al n} \leq 2\, e^{nH(\al,1-\al)}$.

Now consider the fraction in (\ref{eqn:zijexpectation}), which is the probability
that $e(\{1, \ldots, i\}) = j$ in $\G$. Stirling's approximation implies
that the polynomial order term (in $n$) for this fraction is bounded from above,
up to an universal multiplicative constant, by
\begin{equation} \label{eqn:stirlingpolyterms}
\left [ \dfrac{d(nd/2)}{(id-2j)j((nd/2)-id+j)} \right ]^{1/2}.
\end{equation}

We may assume that each of the terms $id-2j$, $j$ and $(nd/2)-id+j$ are positive
integers. For if one of these were zero then the corresponding factorial in (\ref{eqn:zijexpectation})
would be 1 and we could ignore that term from the calculation. So  $(id-2j)j((nd/2)-id+j) \geq 1$,
which implies that (\ref{eqn:stirlingpolyterms}) is bounded above by $d \sqrt{n}$.

The term of exponential order (in $n$) for the fraction in (\ref{eqn:zijexpectation}) is
$$\dfrac{(id)^{id} \, \big((n-i)d \big )^{(n-i)d} (nd)^{nd/2}}
{(id -2j)^{id -2j}\; (2j)^{j}\; \Big((n-2i)d + 2j\Big)^{((nd/2)-id) + j} \; (nd)^{nd}}\,.$$
This may be written in exponential form as
\begin{align*}
 & \left [ \dfrac{(i/n)^{(i/n)} (1-(i/n))^{1-(i/n)}}{\big((i/n) - \frac{2j}{nd}\big)^{(i/n) - (2j/nd)} \,
 \big(\frac{2j}{nd}\big)^{j/nd} \, \big(1-2(i/n) + \frac{2j}{nd}\big)^{1/2 - (i/n) + j/nd}} \right ]^{nd}\\
 =& \, \exp{\left \{ n \left [\frac{d}{2}H\Big(M(i/n,j/(nd))\Big) - dH\big(i/n,1-(i/n) \big) \right] \right \}}\,.
\end{align*}

Therefore, (\ref{eqn:zijexpectation}) is bounded from above by
$$O(d\sqrt{n}) \exp{ \left \{ n \left [ \frac{d}{2}H\Big(M(i/n,j/(nd))\Big) - (d-1)H\big(i/n,1- (i/n)\big) \right ]\right \}}.$$
\end{proof}

As we want to bound $\E{Z_{i,ki}}$ we analyze of the maximum of
$(d/2)H(M(i/n,ki/nd)) - (d-1)H(i/n,1-(i/n))$ over the range $1 \leq i \leq C_{k,d} \cdot n$.
Lemma \ref{lem:regexpectation} implies that $\E{Z_{i,ki}}$ is bounded from above by
$$O(d\sqrt{n}) \times \exp{\left \{ n[(d/2)H(M(i/n,ki/nd)) - (d-1)H(i/n,1-(i/n))] \right \}}.$$

It is convenient to work with the analytic continuation of the terms involving the entropy.
Recall that $h(x) = -x\log x$. If we set $\al = i/n$ then $(d/2)H(M(i/n,ki/nd)) - (d-1)H(i/n,1-(i/n))$ equals
\begin{equation} \label{eqn:entterm}
(d/2)[h(\al(2k/d)) + 2h(\al - \al(2k/d)) + h(1-2\al + \al(2k/d))] - (d-1)H(\al,1-\al).
\end{equation}

Here $\al$ lies in the range $1/n \leq \al \leq C_{k,d}$.
We will show that (\ref{eqn:entterm}) is decreasing in $\al$ if $0 \leq \al \leq C_{k,d}$.
We will then evaluate its value at $\al = 1/n$ to show that the leading term (in $n$)
is $(1-k)(\log n)/n$. This will allow us to conclude Lemma \ref{lem:ksparse}.

\begin{lem} \label{lem:decreasingent}
Suppose that $2 \leq k \leq (1- 1/\sqrt{2})d$. Then the entropy term in (\ref{eqn:entterm}) is
decreasing as a function of $\al$ for $0 \leq \al \leq C_{k,d}$.
\end{lem}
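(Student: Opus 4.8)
The plan is to differentiate the expression in (\ref{eqn:entterm}) once and show the derivative is nonpositive on $(0,C_{k,d}]$. Write $\beta = 2k/d$, so that the hypothesis $k \le (1-\tfrac{1}{\sqrt2})d$ becomes $\beta \le 2-\sqrt2$; in particular $(2-\beta)^2 \ge 2$ and $2-\beta \ge \sqrt2 > 1$. Let $g(\al)$ denote the expression in (\ref{eqn:entterm}), regarded as a smooth function of $\al$, and recall $h'(x) = -\log x - 1$. Differentiating, and using $\al - \al\beta = \al(1-\beta)$ and $1-2\al+\al\beta = 1-\al(2-\beta)$, the constant contributions from the $-1$ in $h'$ cancel, while the $\log\al$ terms collect with coefficient $\tfrac{d}{2}(\beta-2)+(d-1) = k-1$. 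A direct computation then yields
\[ g'(\al) = (k-1)\log\al + P(\al) + c, \]
with $P(\al) = \tfrac{d}{2}(2-\beta)\log\bigl(1-\al(2-\beta)\bigr) - (d-1)\log(1-\al)$ and $c = \tfrac{d}{2}\bigl(h(\beta)+2h(1-\beta)\bigr)$.

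I would first dispose of $P$. Since $P(0)=0$ it suffices to show $P'(\al)\le 0$. One computes
\[ P'(\al) = \frac{d-1}{1-\al} - \frac{(d/2)(2-\beta)^2}{1-\al(2-\beta)}, \]
and on $[0,C_{k,d}]$ all denominators are positive because $\al \le C_{k,d} < e^{-4} < \tfrac12$ and $2-\beta < 2$ force $1-\al(2-\beta) > 0$. The hypothesis enters exactly here: $\tfrac{d}{2}(2-\beta)^2 \ge d > d-1$, while $2-\beta \ge 1$ gives $1-\al(2-\beta) \le 1-\al$. Chaining these two inequalities shows $\tfrac{(d/2)(2-\beta)^2}{1-\al(2-\beta)} \ge \tfrac{d-1}{1-\al}$, i.e. $P'(\al)\le 0$, so $P(\al)\le 0$ throughout $[0,C_{k,d}]$.

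With $P\le 0$ the bound reduces to $g'(\al) \le (k-1)\log\al + c$, and since $k \ge 2 > 1$ this majorant is increasing in $\al$, hence maximized at $\al = C_{k,d}$. It remains to check $(k-1)\log C_{k,d} + c \le 0$, and this is where the precise shape of $C_{k,d} = e^{-4}(2k/d)^{1+1/(k-1)}$ pays off. Expanding, $(k-1)\log C_{k,d} = -4(k-1) + k\log(2k/d)$, while $\tfrac{d}{2}h(\beta) = k\log(d/(2k)) = -k\log(2k/d)$ and, by (\ref{eqn:hproperty})(3), $d\,h(1-\beta) \le d(\beta-\beta^2/2) = 2k - 2k^2/d$. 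The $\pm k\log(2k/d)$ terms cancel and what survives is
\[ (k-1)\log C_{k,d} + c \le -4(k-1) + 2k - \tfrac{2k^2}{d} = -2(k-2) - \tfrac{2k^2}{d} < 0 \quad (k \ge 2). \]
Thus $g'(\al) < 0$ on $(0,C_{k,d}]$; since $g$ is continuous at $0$ with $g(0)=0$, this shows $g$ is strictly decreasing on $[0,C_{k,d}]$, as claimed.

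The main obstacle is the sign of $P'$: the threshold $(1-\tfrac{1}{\sqrt2})d$ is sharp for the inequality $(2-\beta)^2 \ge 2$, which is exactly what is needed for the favorable (negative) ``across'' entropy term $\tfrac{d}{2}(2-\beta)\log(1-\al(2-\beta))$ to dominate the unfavorable $-(d-1)\log(1-\al)$ term uniformly in $\al$; for $\beta$ past this threshold $P$ would turn positive near $0$ and monotonicity could fail. The endpoint estimate, by contrast, is routine once one notices that the exponent $1+\tfrac{1}{k-1}$ and the factor $e^{-4}$ in $C_{k,d}$ are engineered precisely to cancel the $\log(2k/d)$ contributions, leaving the manifestly nonpositive $-2(k-2) - 2k^2/d$.
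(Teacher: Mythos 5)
Your proof is correct, and its skeleton coincides with the paper's: differentiate the expression in (\ref{eqn:entterm}), note that the constant contributions of $h'$ cancel while the $\log \alpha$ coefficient collects to $k-1$, and then show separately that the logarithmic difference term and the increasing majorant $(k-1)\log\alpha + \frac{d}{2}\big(h(\beta)+2h(1-\beta)\big)$ are negative on $(0,C_{k,d}]$. Where you genuinely diverge is in the two sub-arguments. For $P(\alpha)=\frac{d}{2}(2-\beta)\log\big(1-\alpha(2-\beta)\big)-(d-1)\log(1-\alpha)$, the paper expands both logarithms to cubic order via Taylor bounds on $-\log(1-x)$ and verifies negativity on $(0,1/2)$ by a term-by-term comparison; your route --- $P(0)=0$ together with $P'(\alpha)\le 0$, which reduces to the single inequality $\frac{d}{2}(2-\beta)^2\ge d-1$ combined with $1-\alpha(2-\beta)\le 1-\alpha$ --- is cleaner and makes transparent that the hypothesis $k\le\big(1-\frac{1}{\sqrt{2}}\big)d$ enters exactly at $(2-\beta)^2\ge 2$, which is the same place the paper uses $(1-k/d)\ge 1/\sqrt{2}$. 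For the endpoint, the paper bounds $h(1-\beta)\le\beta$ and shows the first zero $\alpha^{*}$ of the majorant satisfies $\alpha^{*}\ge e^{-4}(2k/d)^{1+1/(k-1)}=C_{k,d}$ (via $2k/(k-1)\le 4$ for $k\ge2$), whereas you evaluate directly at $\alpha=C_{k,d}$ with the sharper bound $h(1-\beta)\le\beta-\beta^{2}/2$ and obtain the explicit margin $-2(k-2)-2k^{2}/d<0$; the two computations are equivalent in content, with yours giving strict negativity of the derivative on all of $(0,C_{k,d}]$ at the cost of nothing. In short: same decomposition and same use of the hypothesis, but your monotonicity argument for $P$ is a more elementary replacement for the paper's series manipulations.
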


\begin{proof}
We differentiate (\ref{eqn:entterm}) to show that it is negative for $0 < \al < C_{k,d}$.
Notice that the derivative $h'(\al) = - 1 - \log(\al)$. Differentiating (\ref{eqn:entterm}) in $\al$ and simplifying gives
$$\frac{d}{2} \Big( h(\frac{2k}{d}) + 2h(1-\frac{2k}{d})\Big) +
(k-1)\log(\al) + (d-1)(-\log(1-\al)) -(d-k)(-\log(1-2\al + \frac{2k}{d}\al))\,.$$

First, we deal with the term $(d-1)(-\log(1-\al)) -(d-k)(-\log(1-2\al + \frac{2k}{d}\al))$
and show that it is negative for $0 < \al < 1/2$. We will use the following inequalities for $-\log(1-x)$
which can be deduced from Taylor expansion. If $0 \leq x \leq 1/2$ then
$-\log(1-x) \leq x +(1/2)x^2 + (2/3)x^3$. If $0 \leq x \leq 1$ then $-\log(1-x) \geq x + (1/2)x^2 + (1/3)x^3$.
From these inequalities we conclude that $(d-1)(-\log(1-\al)) -(d-k)(-\log(1-2\al + \frac{2k}{d}\al))$
is bounded from above by
$$(d-1)(\al + \frac{\al^2}{2} + \frac{\al^3}{3}) -(d-k)[2(1-\frac{k}{d})\al +2 (1-\frac{k}{d})^2\al^2
+ \frac{8}{3}(1-\frac{k}{d})^3\al^3]\,.$$

The term $(1-\frac{k}{d})$ is positive and decreasing in $k$ if $2 \leq k \leq (1-1/\sqrt{2})d$.
Its minimum value is $1/\sqrt{2}$. Thus, $(1-\frac{k}{d})^2 \leq 1/2$ and $(1-\frac{k}{d})^3 \leq 1/\sqrt{8}$.
We deduce from this that
\begin{align*}
(d-1)(\al + \frac{\al^2}{2} + \frac{\al^3}{3}) -(d-k)[2(1-\frac{k}{d})\al +2 (1-\frac{k}{d})^2\al^2 + \frac{8}{3}(1-\frac{k}{d})^3\al^3] &\leq\\
(d-1)(\al + \frac{\al^2}{2} + \frac{\al^3}{3}) - \frac{d}{\sqrt{2}}[\sqrt{2}\al + \al^2 + \frac{\sqrt{8}}{3}\al^3] & =\\
- \al - \frac{(\sqrt{2}-1)d + 1}{2}\al^2 - \frac{d+3}{3}\al^3 &.
\end{align*}
The last term is clearly negative for positive $\al$. This shows what we had claimed.

Now we consider the term $\frac{d}{2} \big( h(\frac{2k}{d}) + 2h(1-\frac{2k}{d})\big) + (k-1)\log(\al)$
and show that it is negative for $0 < \al < C_{k,d}$. By property (2) of $h(x)$ from
(\ref{eqn:hproperty}) we have $h(1-x) \leq x$. Therefore, $h(1-\frac{2k}{d}) \leq 2k/d$ and
$(d/2)[h(\frac{2k}{d}) + 2h(1-\frac{2k}{d})] \leq k\log(d/2k) + 2k$. Thus,
$$\frac{d}{2} \big( h(\frac{2k}{d}) + 2h(1-\frac{2k}{d})\big) + (k-1)\log(\al) \leq k\log(d/2k) + 2k + (k-1)\log(\al).$$

The latter term in increasing in $\al$ because $k \geq 2$ and it tends to $-\infty$ as $\al \to 0$.
It is therefore negative until its first zero, which is the value $\al^{*}$ satisfying
$- \log(\al^{*}) = \frac{ k \log(d/2k) + 2k}{k-1}$. Observe that
$\frac{ k\log(d/2k) + 2k}{k-1} \leq (1 + \frac{1}{k-1})\log(d/2k) + 4$ since $k \geq 2$.
Consequently, $\al^{*} \geq e^{-4}(2k/d)^{1 + 1/(k-1)}$ and we conclude that
$(d/2)[h(\frac{2k}{d}) + 2h(1-\frac{2k}{d})] +(k-1)\log(\al)$ is negative
for $0 < \al < C_{k,d}$.

The proof is now complete since we have shown that if $2 \leq k \leq (1 - 1/\sqrt{2})d$ then
the derivative of (\ref{eqn:entterm}) is negative for $0 < \al < C_{k,d}$ .
\end{proof}

\begin{lem} \label{lem:boundent}
Suppose that $2 \leq k \leq (1-1/\sqrt{2})d$ and $0 \leq \al \leq 1$. Then the
entropy term (\ref{eqn:entterm}) is bounded from above by
$$\al ( k \log(d) + 1 ) +h(\al)(1-k) + (d/2)\al^3.$$
\end{lem}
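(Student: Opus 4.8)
The plan is to expand the entropy term (\ref{eqn:entterm}), peel off the part proportional to $h(\al)$ \emph{exactly} using property (1) of $h$, and then bound the three remaining ``$h(1-\,\cdot\,)$'' contributions using properties (2) and (3). Write $\beta = 2k/d$, so that $0 < \beta \leq 2-\sqrt{2}$ under the hypothesis $2 \leq k \leq (1-1/\sqrt{2})d$. Applying $h(xy) = xh(y)+yh(x)$ to $h(\al\beta)$ and to $h(\al-\al\beta)=h(\al(1-\beta))$ produces the summands $\beta h(\al)$ and $(1-\beta)h(\al)$, which enter (\ref{eqn:entterm}) with coefficients $(d/2)\beta = k$ and $d(1-\beta)=d-2k$ respectively. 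Combined with the $-(d-1)h(\al)$ coming from $-(d-1)H(\al,1-\al)$, the total coefficient of $h(\al)$ is $k+(d-2k)-(d-1)=1-k$, matching the $h(\al)(1-k)$ term in the claimed bound exactly. It therefore remains to bound the leftover expression
$$ \tfrac{d}{2}\al\, h(\beta) + d\al\, h(1-\beta) + \tfrac{d}{2}\, h(1-\al(2-\beta)) - (d-1)h(1-\al) $$
from above by $\al(k\log d + 1) + \tfrac{d}{2}\al^3$.

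I would then estimate these four terms and collect powers of $\al$. The first is exact: $\tfrac{d}{2}\al\, h(\beta) = k\al\log(d/2k) = k\al\log d - k\al\log(2k)$, whose leading piece $k\al\log d$ is exactly the main term on the right. For the second I use property (3) in the weak form $h(1-\beta)\leq\beta$, giving $d\al\,h(1-\beta)\leq 2k\al$. For the third, writing $\al(2-\beta)=2\al(1-k/d)$ and applying property (3) as $h(1-x)\leq x-x^2/2$, I obtain $\tfrac{d}{2}h(1-\al(2-\beta)) \leq (d-k)\al - d(1-k/d)^2\al^2$. For the fourth I apply property (2), $h(1-\al)\geq \al-\al^2/2-\al^3/2$, which after multiplication by $-(d-1)$ yields $-(d-1)h(1-\al)\leq -(d-1)\al + \tfrac{d-1}{2}\al^2 + \tfrac{d-1}{2}\al^3$. (These $h(1-\,\cdot\,)$ estimates are applied to arguments lying in $[0,1]$, which is precisely the range of $\al$ on which the entry $1-2\al+\al\beta$ of the edge profile, and hence (\ref{eqn:entterm}) itself, is nonnegative.)

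Summing and grouping by powers of $\al$, the coefficient of $\al$ becomes $k\log d - k\log(2k) + k + 1$, the coefficient of $\al^2$ becomes $(2k-k^2/d)-(d+1)/2$, and the coefficient of $\al^3$ becomes $(d-1)/2$. Comparing against the target coefficients $k\log d + 1$, $\,0$, and $d/2$, the proof closes by verifying three pointwise inequalities (each multiplied by a nonnegative power of $\al$): the linear comparison reduces to $\log(2k)\geq 1$, which holds since $k\geq 2$; the cubic comparison is the trivial $(d-1)/2 \leq d/2$; and the quadratic comparison reduces to $2k-k^2/d \leq d/2$. I expect this last, quadratic, estimate to be the crux and the main obstacle: the map $k\mapsto 2k-k^2/d$ is increasing on $[0,d]$ and attains the value $d/2$ exactly at $k=(1-1/\sqrt{2})d$, so the inequality is tight at the right endpoint and fails beyond it. This is precisely what forces the constant $1-1/\sqrt{2}$ in the hypothesis, whereas the linear step is responsible for the constraint $k\geq 2$.
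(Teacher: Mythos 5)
Your proposal is correct and follows essentially the same route as the paper's proof: both peel off the $h(\al)$ contribution exactly via property (1) (obtaining the coefficient $k+(d-2k)-(d-1)=1-k$), bound $h(1-2\al+\al(2k/d))$ by property (3), bound $h(1-\al)$ below by property (2), use $h(1-2k/d)\leq 2k/d$, and then verify that the $\al^2$ coefficient $\frac{d-1}{2}-d(1-k/d)^2 = (2k-k^2/d)-\frac{d+1}{2}$ is nonpositive for $k\leq(1-1/\sqrt{2})d$ and that $k-k\log(2k)\leq 0$ for $k\geq 2$. Your explicit remark about restricting to $\al$ with $1-2\al+\al(2k/d)\geq 0$ is, if anything, a point of care the paper leaves implicit.
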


\begin{proof}
We use the properties of $h(x)$ from (\ref{eqn:hproperty}). We have that
$h(\frac{2k}{d}\al) = \al h(\frac{2k}{d}) + \frac{2k}{d}h(\al)$,
$h(\al - \frac{2k}{d}\al) = \al h(1-\frac{2k}{d}) + (1-\frac{2k}{d})h(\al)$, and
$h(1-2\al + \frac{2k}{d}\al) \leq (2\al - \frac{2k}{d}\al) - \frac{1}{2}(2\al - \frac{2k}{d}\al)^2$.

Therefore,
\begin{align*}
&h(\al(2k/d)) + 2h(\al - \al(2k/d)) + h(1-2\al + \al(2k/d)) \; \leq \\
& \al \Big ( h(\frac{2k}{d}) + 2h(1-\frac{2k}{d}) + 2 - \frac{2k}{d} \Big ) + 2h(\al)(1 - \frac{k}{d}) - 2\al^2(1-\frac{k}{d})^2\,.
\end{align*}

Now, $H(\al, 1- \al) = h(\al) + h(1-\al) \geq h(\al) + \al -(1/2)\al^2 - (1/2)\al^3$ by property (3) of (\ref{eqn:hproperty}).
As a result (\ref{eqn:entterm}) is bounded from above by
\begin{equation} \label{eqn:entupperbound}
\al \Big[ \frac{d}{2}h(\frac{2k}{d}) + dh(1-\frac{2k}{d}) + 1-k\Big] -(k-1)h(\al) +
\al^2[\frac{d-1}{2}-d(1-\frac{k}{d})^2] + \frac{d-1}{2}\al^3.
\end{equation}

The term $\frac{d-1}{2}-d(1-\frac{k}{d})^2$ is increasing in $k$ and maximized when $k = (1-1/\sqrt{2})d$,
where it equals $-1/2$. Thus, $\al^2(\frac{d-1}{2}-d(1-\frac{k}{d})^2)$ is negative. The term
$ \frac{d}{2}h(\frac{2k}{d}) + dh(1-\frac{2k}{d}) + 1-k$ simplifies to $ k\log(d) -k\log(2k) + k + 1$, which
is at most $k\log(d) + 1$ because $k - k\log(2k) < 0$ if $k \geq 2$. Consequently, (\ref{eqn:entupperbound})
is bounded from above by $\al ( k \log(d) + 1 ) +h(\al)(1-k) + (d/2)\al^3$ as required.

\end{proof}

\paragraph{\textbf{Completion of the proof of Lemma} \ref{lem:ksparse}}

Recall that $N$ was defined to be the number of subsets $S \subset V(\G)$
of size at most $C_{k,d}\cdot n$ such that $S$ is not $k$-sparse. From (\ref{eqn:Nbound})
we have
$$\E{N} \leq dn^2 \max_{i \leq i \leq C_{k,d}n} \E{Z_{i,ki}}.$$

By Lemma \ref{lem:regexpectation}, $\E{Z_{i,ki}}$ is bounded from above by\\
$O(d\sqrt{n}) \times \exp{\left \{ n[(d/2)H(M(i/n,ki/nd)) - (d-1)H(i/n,1-(i/n))] \right \}}$.
Now,
\begin{align}
&\max_{1 \leq i \leq C_{k,d}n} (d/2)H(M(i/n,ki/nd)) - (d-1)H(i/n,1-(i/n)) \; \leq \nonumber \\
&\sup_{\frac{1}{n} \leq \al \leq C_{k,d}} (d/2)H(M(\al,(k/d)\al)) - (d-1)H(\al,1-\al), \label{eqn:entcontinuous}
\end{align}

where $\al$ is a continuous parameter. Lemma \ref{lem:decreasingent} shows that the supremum
of (\ref{eqn:entcontinuous}) is achieved at $\al = 1/n$ provided that $2 \leq k \leq (1-1/\sqrt{2})d$.
Lemma \ref{lem:boundent} implies that when $2 \leq k \leq (1-1/\sqrt{2})d$
the term in (\ref{eqn:entcontinuous}) is bounded from above at $\al = 1/n$ by
$\frac{1}{n}(k\log(d) + 1) + \frac{\log(n)}{n}(1-k) + \frac{d}{2n^3}$.
Therefore, we deduce that for $2 \leq k \leq (1-1/\sqrt{2})d$,
$$\E{N} \leq O(d^2n^{2.5}) \exp{\left \{n \big[ \frac{1}{n}(k \log(d) + 1) + \frac{\log(n)}{n}(1-k) + \frac{d}{2n^3} \big ] \right \}}.$$

If $n \geq \sqrt{d}$ then we see that $\E{N} \leq O(d^{k+2})n^{3.5-k}$. In particular,
if $k > 3.5$ then $\E{N} \to 0$ as $n \to \infty$. Hence, $\pr{N \geq 1} \leq \E{N} \to 0$
and this is precisely the statement of Lemma \ref{lem:ksparse}.

\subsection{Density of $k$-sparse graphs: proof of Lemma \ref{lem:densitybound}} \label{sec:regulardensitybound}

We begin with the following elementary lemma about the density of $k$-sparse sets.

\begin{lem} \label{lem:edgecountbound}
Let $S$ be a $k$-sparse set in a finite $d$-regular graph $G$. Then $|S|/|G| \leq \frac{d}{2d-2k}$.
\end{lem}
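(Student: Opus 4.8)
The plan is to run a double-counting argument on the degree endpoints (half-edges) incident to $S$. Write $s = |S|$ and $n = |V(G)|$, and let $e(S, S^c)$ denote the number of edges of $G$ with exactly one endpoint in $S$. Since $G$ is $d$-regular, the sum of the degrees over the vertices of $S$ equals $ds$; accounting for how this degree sum is distributed between internal and crossing edges gives the identity
$$ ds = 2e(S) + e(S, S^c), $$
because each edge inside $G[S]$ contributes two endpoints to $S$ while each crossing edge contributes one. This identity remains valid when $G$ is a multigraph, using the convention that a loop contributes $2$ to a degree and is counted once in $e(S)$; that robustness is exactly what lets the lemma apply to samples from the configuration model.

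Next I would bound the two terms on the right separately. The $k$-sparsity hypothesis says precisely that $e(S) = |E(G[S])| \leq k\,|V(G[S])| = ks$. For the crossing edges, every such edge consumes one degree endpoint at a vertex of $S^c$, and the total number of degree endpoints available in $S^c$ is $d(n-s)$; hence $e(S, S^c) \leq d(n-s)$.

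Combining the identity with these two estimates yields $ds \leq 2ks + d(n-s)$, which rearranges to $2(d-k)s \leq dn$. In the regime of interest $k \leq (1 - 1/\sqrt{2})d < d$, so the factor $d - k$ is positive and dividing gives $s/n \leq d/(2d - 2k)$, as claimed.

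The argument is entirely elementary, so I do not anticipate a real obstacle; the only point demanding minor care is the bound $e(S, S^c) \leq d(n-s)$, where one must count degree endpoints in $S^c$ rather than edges and check the multigraph convention. I would also note that the estimate is essentially tight, being saturated when $G[S]$ meets the sparsity bound and $S^c$ is independent, which indicates the inequality cannot be sharpened without additional hypotheses.
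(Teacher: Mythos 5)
Your proof is correct and takes essentially the same route as the paper: the paper's argument is the identity $d|S| = 2e(S) + e(S,S^c)$ written in normalized edge-profile form, $|S|/n = m(S,S) + m(S,S^c)$, combined with exactly your two bounds $e(S) \leq k|S|$ and $e(S,S^c) \leq d|S^c|$. Your added remarks on the multigraph convention and tightness are fine but not needed.
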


\begin{proof}
Set $|G| = n$, and so $|E(G)| = nd/2$. Consider the edge-profile $M(S)$ of $S$.
We have that $|S|/n = m(S,S) + m(S,S^{c})$.
Since $S$ is $k$-sparse, $m(S,S) \leq 2k|S|/(nd)$.
The number of edges from $S$ to $S^{c}$ is at most $d|S^{c}|$
because $\G$ is $d$-regular. Therefore, $m(S,S^{c}) \leq |S^{c}|/n$.
Consequently, $|S|/n \leq (\frac{2k}{d}-1)|S|/n + 1$, which implies
that $|S|/n \leq \frac{d}{2d-2k}$.
\end{proof}

Let $E$ denote the event that $\G$ contains an induced $k$-sparse subgraph of size $\al n$.
We bound the probability of $E$ by using the first moment method as well.
We will call a subset $S \subset V(\G)$ $k$-sparse if it induces a $k$-sparse subgraph.
By definition, any $k$-sparse set $S$ has the property that $e(S) \leq k |S|$.

Let $Z = Z(\al, \G)$ be the number of $k$-sparse sets in $\G$ of size $\al n$.
Recall the notation $Z_{i,j}$ from Section \ref{sec:ksparse}.
Let $Z_j = Z_{\al n,j}(\G)$ be the number of subsets $S \subset \G$ such that $|S| = \al n$
and the number of edges in $\G[S]$ is $j$. Then
\begin{equation} \label{eqn:Zbound}
\E{Z} = \sum_{j=0}^{k \al n} \E{Z_j}\,.
\end{equation}

From Lemma \ref{lem:zijexpectation} we see that $\E{Z_j}$ is of exponential order in $n$.
So the sum in (\ref{eqn:Zbound}) is dominated by the largest term.
From Lemma \ref{lem:zijexpectation} applied to $i = \al n$ and $j$ we conclude that
\begin{equation} \label{eqn:ziexpectation}
 \E{Z_j} = \dbinom{n}{\al n} \times
 \dfrac{(\al nd)! \; \big((1-\al)nd\big)! \; (nd/2)! \; 2^{\al nd -2j}}{(\al nd -2j)!\; j!\; \Big(\frac{(1-2\al)}{2}nd + j\Big)! \; (nd)!}.
\end{equation}

\begin{lem} \label{lem:maxexpectation}
If $\al > \frac{2k}{d}$ then the expectation of $Z_i$ is maximized at $i = k \al n$,
for all sufficiently large $n$. Note that $k \al n$ is the maximum number of edges
contained in a $k$-sparse set.
\end{lem}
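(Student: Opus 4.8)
The plan is to show that the sequence $\E{Z_j}$ is nondecreasing in $j$ over the entire range $0 \le j \le k\al n$, so that its maximum over this range is attained at the right endpoint $j = k\al n$. For this I would reuse the exact ratio formula established in the proof of Lemma \ref{lem:Zijratio}, which is valid for all $i$ and $j$; specializing to $i = \al n$ it reads
$$\frac{\E{Z_{\al n,j+1}}}{\E{Z_{\al n,j}}} = \frac{(\al nd -2j-1)(\al nd -2j)}{4(j+1)\left(\frac{(1-2\al)nd}{2}+j+1\right)}.$$
It therefore suffices to prove that this ratio is at least $1$ for every $j$ with $0 \le j \le k\al n - 1$.

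Next I would invoke the algebraic identity from that same proof: the numerator minus the denominator equals $\al nd(\al nd - 1) - 2(1-2\al)nd - 4 - 2j(nd+3)$, which is an affine, strictly decreasing function of $j$. Hence the ratio is at least $1$ precisely when $j$ lies below the ($j$-independent) threshold $\frac{\al nd(\al nd-1) - 2(1-2\al)nd - 4}{2(nd+3)}$, and the whole range $0 \le j \le k\al n - 1$ obeys the ratio bound as soon as its largest element does. Thus the lemma reduces to the single inequality
$$(k\al n - 1)\,2(nd+3) \;\le\; \al nd(\al nd - 1) - 2(1-2\al)nd - 4.$$

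Finally I would verify this as a polynomial inequality in $n$. Expanding both sides, the coefficient of $n^2$ on the left is $2k\al d$ while on the right it is $\al^2 d^2$, so the net $n^2$-coefficient is $\al d(\al d - 2k)$, which is strictly positive exactly under the hypothesis $\al > 2k/d$. All remaining terms are $O(n)$, so for all sufficiently large $n$ the right-hand side exceeds the left, which gives the claim. The main (and essentially only) obstacle is the boundary behaviour: at $\al = 2k/d$ the quadratic coefficients cancel and the inequality would be decided by the lower-order terms. The strict inequality in the hypothesis is precisely what makes the dominant coefficient positive and lets large $n$ absorb everything else, which is why the conclusion must be stated ``for all sufficiently large $n$'' rather than for every $n$.
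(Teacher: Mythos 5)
Your proposal is correct and follows essentially the same route as the paper: the same ratio $\E{Z_{j+1}}/\E{Z_j}$, the same observation that numerator minus denominator is affine and decreasing in $j$ so only the endpoint $j$ near $k\al n$ needs checking, and the same asymptotic comparison in which the $n^2$-coefficient $\al d(\al d - 2k)$ is strictly positive precisely because $\al > 2k/d$. The paper phrases the last step as a contradiction (dividing by $n$ and letting $n \to \infty$ yields $k\al \geq \tfrac{1}{2}\al^2 d$, i.e.\ $\al \leq 2k/d$), which is your direct coefficient count in contrapositive form.
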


\begin{proof}
We argue as in the proof of Lemma \ref{lem:Zijratio}.
From the equation for $\E{Z_j}$ in (\ref{eqn:ziexpectation}) we deduce that
\[\frac{\E{Z_{j+1}}}{\E{Z_j}} = \dfrac{(\al nd -2j-1)(\al nd -2j)}{4 (j+1)\Big(\frac{(1-2\al)}{2}nd + j + 1\Big)}\,.\]

This ratio is at least 1 for all $0 \leq j \leq k \al n$ if $n$ is sufficiently large and $\al > 2k/d$.
Indeed, subtracting the denominator from the numerator gives
$\al nd(\al nd-1) -2(1-2\al)nd - 4 - 2j(nd + 3)$. This is non-negative for all $0 \leq j \leq k \al n$ if and only if
\begin{equation} \label{eqn:ratiobound2}
k \al n \leq \dfrac{\frac{1}{2}(\al nd)(\al nd -1) - (1-2\al)nd - 2}{nd + 3}\,.
\end{equation}

If the inequality in (\ref{eqn:ratiobound2}) fails to hold for all sufficiently large $n$ then after dividing through by $n$ and 
letting $n \to \infty$ we conclude that $k \al \geq (1/2)\al^2 d$. This implies that $\al \leq 2k/d$, which contradicts our assumption.

\end{proof}

From Lemma \ref{lem:regexpectation} applied to $\E{Z_{i,j}}$ for $i = \al n$ and $j = k \al n$ we conclude that
\begin{equation} \label{eqn:regexpectation}
\E{Z_j} \leq O(\sqrt{n}) \, \exp{\left \{ n \left [ \frac{d}{2}H\Big(M(\al,j/nd)\Big) - (d-1)H\big(\al,1-\al\big) \right ]\right \}}.
\end{equation}

For the rest of this section we assume that $\al \geq (\log d)/d$ and $d$ is large enough
such that $(\log d)/d > 2k/d$. This will hold since $k = o(\log d)$. If $\al < (\log d)/d$ then there is nothing to prove.
We conclude from Lemma \ref{lem:maxexpectation}, (\ref{eqn:regexpectation}) and (\ref{eqn:Zbound}) that
\begin{align}
\E{Z} & \leq (kn) \, \E{Z_{k \al n}} \nonumber \\
 & \leq O(kn^{3/2}) \, \exp{\left \{ n \left [ \frac{d}{2}H\Big(M(\al,\frac{k}{d} \al)\Big) - (d-1)H\big(\al,1-\al\big) \right ]\right \}}. \label{eqn:Zmax}
\end{align}

Note that $M(\al, \frac{k}{d} \al)$ equals
$$ M(\al, \frac{k}{d} \al) =  \left [ \begin{array}{cc}
 \frac{2k \al}{d} & \al - \frac{2k \al}{d}\\
\al - \frac{2k \al}{d} & 1-2\al + \frac{2k \al}{d}
\end{array} \right ].
$$
This matrix may depend on $n$ through $\alpha$. If it does then we replace $\al$ by its limit supremum as $n \to \infty$.
By an abuse of notation we denote the limit supremum by $\al$ as well.

For $d \geq 3$ define $\al_d = \al_{d,k}$ by
$$
\al_d = \sup \big \{\al : 0 \leq \al \leq 1 \;\text{and}\;\; \frac{d}{2} H\big(\M(\al,\frac{k}{d}\al)\big) - (d-1)H(\al,1-\al) \geq 0 \big \}.
$$

Thus, if $\al > \al_d$ then from the continuity of the entropy function $H$ we conclude that for all
sufficiently large $n$ the function $\frac{d}{2}H\Big(M(\al,\frac{k}{d} \al)\Big) - (d-1)H\big(\al,1-\al\big) < 0$.
Consequently, from (\ref{eqn:Zmax}) we conclude that $\limsup_{n \to \infty} \pr{E} \leq \limsup_{n \to \infty} \E{Z} = 0$.
We devote the rest of this section to bounding the entropy functional in order to show that
$\al_d \leq (2+\eps) \frlog$ for all large $d$.

First, we show that $\al_d \to 0$ as $d \to \infty$. Suppose otherwise, that $\limsup_{d \to \infty} \al_d = \al_{\infty} > 0$.
Lemma \ref{lem:edgecountbound} implies that $\al_{\infty} \leq 1/2$ because $\al_d \leq d/(2d-2k)$ and $k = o(\log d)$.
After passing to an appropriate subsequence in $d$, noting that $2k/d \to 0$ as $d \to \infty$ due to $k = o(\log d)$,
and using the continuity of $H$ we see that
\[ \lim_{d \to \infty} \, \frac{1}{2}H\big(\M(\al_d, \frac{k}{d} \al_d)\big) - H(\al_d, 1 - \al_d) =
\frac{1}{2}H\big(M(\al_{\infty},0)\big) - H(\al_{\infty}, 1 - \al_{\infty})\,.\]

However, $(1/2)H(M(x,0)) - H(x,1-x) = (1-x)\log(1-x) -(1/2)(1-2x)\log(1-2x)$, and this is negative for $0 < x \leq 1/2$.
This can be seen by noting that the derivative of the expression is negative for $x > 0$ and the expression vanishes
at $x=0$. Therefore, for all large $d$ along the chosen subsequence we have
$\frac{d}{2}H\big(\M(\al_d,\frac{k}{d} \al_d)\big) - (d-1)H(\al_d, 1 - \al_d)  < 0$; a contradiction.

We now analyze the supremum of the entropy functional for large $d$ in order to bound $\al_d$.
From the properties of $h(x)$ in (\ref{eqn:hproperty}) we deduce that
\begin{align}
H\big(\M(\al)\big) &= h\left (\frac{2k \al}{d} \right ) + 2h\left (\al - \frac{2k \al}{d} \right ) +
h\left (1-2\al + \frac{2k \al}{d} \right ) \nonumber \\
&\leq 2[h(\al) + \al -\al^2] + \frac{2k}{d}[\al -h(\al) + \al \log (\frac{d}{2k}) + 2\al^2] \,, \label{eqn:matentropy} \\
H(\al, 1-\al) &= h(\al) + \al - \frac{1}{2}\al^2 + O(\al^3). \label{eqn:vecentropy}
\end{align}

From (\ref{eqn:matentropy}) and (\ref{eqn:vecentropy}) we see that $\frac{d}{2}H(\M(\al)) -(d-1)H(\al,1-\al)$ is at most
\begin{equation} \label{eqn:entropyasymp}
 - \frac{d}{2}\al^2 + k[\al -h(\al) + \al \log \big (\frac{d}{2k}\big) + 2\al^2 ] + \al + h(\al) + O(d\al^3).
\end{equation}

Now, $k(\al + 2\al^2) + \al \leq 4k \al$ and $\log(d/2k) \leq \log (d/k)$. Hence, (\ref{eqn:entropyasymp}) is at most
\begin{equation} \label{eqn:entropyasymp2}
 - \frac{d}{2}\al^2 + k[\al \log (d/k) - h(\al)] + h(\al) + 4k\al + O(d\al^3).
 \end{equation}

Let us write $\al = \beta \frlog$ where $\beta \geq 1$.
In terms of $\beta$, $h(\al) = \beta \frac{\log^2 d - \log d \log\log d}{d} + h(\beta)\frlog$.
Since $\beta \geq 1$, $h(\beta) \leq 0$, and we get that
$- \frac{d}{2}\al^2 + h(\al) \leq (-\frac{\beta^2}{2} + \beta) \frac{\log^2 d}{d}$.
The term $\al \log (d/k) - h(\al)$ equals
$ \beta \, \frac{\log d( \log\log d - \log k) }{d} + \beta \log \beta \frlog$.
Substituting $k = \eps_d \log d$ and combining these inequalities together
we see that (\ref{eqn:entropyasymp2}) is bounded from above by
\begin{equation} \label{eqn:entropyasymp3}
[1 - (1/2)\beta +\eps_d \log \beta -\eps_d\log(\eps_d)+ 4\eps_d]\,\beta\frac{\log^2d}{d} + O(\frac{\beta^3 \log^3 d}{d^2}).
\end{equation}

As $\beta = o_d(d/(\log d))$ the term $\beta^3 (\log^3 d)/d^2$ is of order
$o_d(\beta^2 (\log^2 d)/d)$ as $d \to \infty$. Therefore, (\ref{eqn:entropyasymp3}) is of the form
$1 - (1/2 - o_d(1))\beta + \eps_d \log \beta -\eps_d\log(\eps_d)+ 4\eps_d$ for large $d$.
Elementary calculus shows that in order for $1 - [(1/2) - \delta]\beta + \delta \log \beta - \delta \log(\delta) + C \delta$
to be non-negative $\beta$ must satisfy $\beta \leq 2 - 2\delta \log(\delta) + 4C\delta$, provided that $0 \leq \delta \leq 1$.

We conclude that there is a function $\delta(d) = \delta(\eps_d)$ such that $\delta(d) \to 0$ as $d \to \infty$
and (\ref{eqn:entropyasymp3}) is negative unless $\beta \leq 2 + \delta(d)$. As a result, we have
$\al_d \leq (2 + \delta(d)) \frlog$ and the latter is bounded by $(2+\eps) \frlog$ for all large $d$.
This completes the proof of Lemma \ref{lem:densitybound}.

\section{Percolation on Erd\H{o}s-R\'{e}nyi graphs} \label{sec:ER}

\begin{lem} \label{lem:cycle}
The expected number of cycles of length no more than $\tau$ in $\ER$ is at most $d^\tau \log \tau$.
\end{lem}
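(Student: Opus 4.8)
The plan is to prove this by the first moment method, directly enumerating potential cycles. First I would recall that a cycle of length $\ell \ge 3$ in a labelled graph is determined by a choice of $\ell$ vertices together with a cyclic ordering of them up to rotation and reflection, so the number of potential $\ell$-cycles on $n$ vertices is $\binom{n}{\ell}\frac{(\ell-1)!}{2}$. In $\ER$ a fixed such cycle is realised exactly when all $\ell$ of its edges are present, which occurs with probability $(d/n)^\ell$ because edges are included independently. Linearity of expectation then gives that the expected number of cycles of length exactly $\ell$ equals $\binom{n}{\ell}\frac{(\ell-1)!}{2}(d/n)^\ell$.

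Next I would simplify this quantity into an $n$-free bound. Writing $\binom{n}{\ell}\frac{(\ell-1)!}{2} = \frac{n(n-1)\cdots(n-\ell+1)}{2\ell}$ and bounding the falling factorial crudely by $n^\ell$, the expected number of $\ell$-cycles is at most $\frac{n^\ell}{2\ell}\left(\frac{d}{n}\right)^\ell = \frac{d^\ell}{2\ell}$. Summing over all integer lengths $3 \le \ell \le \tau$, the expected number of cycles of length no more than $\tau$ is therefore at most $\sum_{3 \le \ell \le \tau}\frac{d^\ell}{2\ell}$.

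Finally I would control this sum. Since $d \ge 5 > 1$ the sum is geometrically dominated by its last term, and bounding each power $d^\ell$ by $d^\tau$ gives $\sum_{3 \le \ell \le \tau}\frac{d^\ell}{2\ell} \le \frac{d^\tau}{2}\sum_{3 \le \ell \le \tau}\frac{1}{\ell}$. The remaining harmonic sum is handled by the integral comparison $\sum_{3 \le \ell \le \tau}\frac{1}{\ell} \le \int_2^{\tau}\frac{dx}{x} = \log(\tau/2) \le \log \tau$, so the total is at most $\frac{d^\tau}{2}\log\tau \le d^\tau \log\tau$, which is the claimed bound. I do not expect any genuine obstacle here: the asserted estimate is quite loose, so the only point needing a little care is extracting the dominant last term of the sum cleanly. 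Replacing every factor $d^\ell$ by $d^\tau$ accomplishes this at the cost of a harmonic factor, and that factor is precisely the $\log\tau$ that appears in the statement.
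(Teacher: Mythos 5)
Your proposal is correct and follows essentially the same route as the paper: both compute $\E{C_\ell} = \binom{n}{\ell}\frac{\ell!}{2\ell}(d/n)^\ell \leq \frac{d^\ell}{2\ell}$ (your $\binom{n}{\ell}\frac{(\ell-1)!}{2}$ is the same count) and then bound $\sum_{\ell=3}^{\tau} \frac{d^\ell}{2\ell}$ by pulling out $d^\tau$ and comparing the harmonic sum with $\int_2^\tau \frac{dx}{x} = \log(\tau/2) \leq \log\tau$. Your version even gives the marginally sharper constant $\frac{1}{2}d^\tau\log\tau$, so there is nothing to fix.
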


\begin{proof}
Let $C_{\ell}$ denote the number of cycles of length $\ell \geq 3$ in $\ER$.
Note that $\E{C_{\ell}} = \binom{n}{\ell} \frac{\ell!}{2\ell} (d/n)^{\ell}$, and
$\binom{n}{\ell} \frac{\ell!}{2\ell} (d/n)^{\ell} \leq \frac{d^{\ell}}{2\ell}$. The number of
cycles of length at most $\tau$ is $C_{\leq \tau} = C_3 + \cdots + C_{\tau}$.
Note that $\sum_{\ell = 3}^{\tau} 1/(2\ell) \leq \int_2^{\tau} \frac{1}{t} \,dt = \log(\tau/2) \leq \log \tau$.
Thus,
\begin{equation*}
\E{C_{\leq \tau}} = \sum_{\ell =3}^{\tau} \E{C_{\ell}}
 \leq \sum_{\ell =3}^{\tau} \frac{d^{\ell}}{2\ell}
 \leq d^{\tau} \log \tau.
\end{equation*}

\end{proof}

Let $X_{n,\tau}$ be the number of cycles of length at most $\tau$ in $\ER$. It follows from
Lemma \ref{lem:cycle} that if $\tau = \log_d(n) - \log \log \log (n) - \log (\omega_n)$ then
$\E{X_{n,\tau}} = O(n/\omega_n)$.

\subsection{Proof of Theorem \ref{thm:ERpercolation}} \label{sec2.1}
Let $E$ denote the event that $\ER$ contains a percolation
set of size $\al n$ with clusters of size at most $\tu$. We can assume that $\al > (2e)/d$, for otherwise, there
is nothing to prove due to $d \geq 5$. We bound the probability of $E$ by using the
first moment method. From this we will show that if $\al n$ is bigger than the bound in the statement
of Theorem \ref{thm:ERpercolation} then $\pr{E} \to 0$ as $n \to \infty$. 

Set $\mu_n = \E{X_{n,\tau}} = O(n/\omega_n)$ for $\tau$ in the statement of Theorem \ref{thm:ERpercolation}.
Fix $\delta > 0$ and note that $\pr{X_{n,\tau} \geq \mu_n/\delta} \leq \delta$ from Markov's inequality.

Let $Z = Z(\al, \ER)$ be the number of percolation sets in $\ER$ of size $\al n$ with clusters of
size at most $\tu$. From the observation above we have that 
\begin{equation} \label{eqn:Ebound}
\pr{E} \leq \pr{E \cap \{X_{n,\tau} \leq \mu_n/\delta\}} + \delta \leq \E{Z; X_{n,\tau} \leq \mu_n/\delta} + \delta
\end{equation}

where $\E{Z; X_{n,\tau} \leq \mu_n/\delta}$ denotes the expectation of $Z$ on the event
$\{X_{n,\tau} \leq \mu_n/\delta\}$. To prove the theorem it suffices to show that for any $\delta > 0$
the expectation $\E{Z; X_{n,\tau} \leq \mu_n/\delta}$ vanishes to zero as $n \to \infty$ provided that
$\al n$ is bigger than the bound stated in the statement of Theorem \ref{thm:ERpercolation}.
For then we have that $\limsup_{n \to \infty} \pr{E} \leq \delta$ for any $\delta > 0$, and thus, $\pr{E} \to 0$.

We now make a crucial observation about percolation sets with small clusters.
Let $S$ be a percolation set with clusters of size at most $\tu$. If we remove an edge from
every cycle of the induced graph $\mathrm{ER}[S]$ of length at most $\tu$ then the
components of $\mathrm{ER}[S]$ become trees. In that case the number of remaining edges in
$\mathrm{ER}[S]$ is at most $|S|$. Therefore, the number of edges in $\mathrm{ER}[S]$ is at most
$|S| + (\mu_n/\delta)$. This bound is useful as it shows that the subgraph included by percolation
sets with small clusters is much more sparse relative to the original graph.

Let $M = M(\al,\tau, \delta, \ER)$ be the number of subsets $S \subset \ER$ such that
$|S| = \al n$ and the number of edges in $\mathrm{ER}[S]$ is at most $|S| + (\mu_n/\delta)$.
Notice that the number of edges in $\mathrm{ER}[S]$ is distributed as the binomial random
variable $\mathrm{Bin}(\binom{|S|}{2},d/n)$. The observation above implies that
\begin{align}
\E{Z; X_{n,\tau} \leq \mu_n/\delta} &\leq \E{M} \label{eqn:Zbound2} \\
&= \binom{n}{\al n} \, \pr{\mathrm{Bin} \left (\binom{\al n}{2},d/n \right ) \leq \al n + (\mu_n/\delta)}. \nonumber
\end{align}

\begin{lem} \label{lem:binomialbound}
Let $\mathrm{Bin}(m,p)$ denote a binomial random variable with parameters $m \geq 1$ and $0 \leq p \leq 1$.
If $ 0 < p \leq 1/2$ and $0 < \mu \leq 1$ then the following bound holds.
\[ \pr{\mathrm{Bin}(m,p) \leq \mu mp} = O(\sqrt{mp}) \times \exp{\left \{ -m \, \big [ \mu p \log \mu + (1-\mu)p - \mu p^2 \big ] \right \}}.\]
\end{lem}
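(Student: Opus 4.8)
The plan is to bound the lower tail by its single largest summand and then estimate that summand with Stirling's formula, exactly in the spirit of the proof of Lemma \ref{lem:regexpectation}. Write $b_j = \binom{m}{j}p^j(1-p)^{m-j}$ for the binomial weights, so that $\pr{\mathrm{Bin}(m,p)\leq\mu m p} = \sum_{j=0}^{\lfloor\mu m p\rfloor}b_j$. Since $\mu p\leq p$ lies below the mean, the ratio $b_{j+1}/b_j = \frac{(m-j)p}{(j+1)(1-p)}$ is at least $1$ for every $j\leq mp-1$, so the weights are non-decreasing up to $j_0 := \lfloor\mu m p\rfloor$ and the entire sum is at most $(\mu m p + 1)\,b_{j_0}$. (If $\mu m p < 1$ the event is simply $\{\mathrm{Bin}(m,p)=0\}$ and the bound is checked directly from $(1-p)^m\leq e^{-mp}$.)

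Next I would estimate $b_{j_0}$. Setting $a_0 = j_0/m\leq\mu p\leq 1/2$, Stirling's approximation gives $\binom{m}{j_0}\leq \frac{C}{\sqrt{m\,a_0(1-a_0)}}\,e^{mH(a_0)}$ for a universal constant $C$, where $H(a)=h(a)+h(1-a)$. Folding in the factors $p^{j_0}(1-p)^{m-j_0}$ converts the exponent into the relative entropy: writing $I(a)=a\log\frac{a}{p}+(1-a)\log\frac{1-a}{1-p}$ one obtains $b_{j_0}\leq \frac{C}{\sqrt{m\,a_0(1-a_0)}}\,e^{-mI(a_0)}$. The function $I$ is decreasing on $(0,p)$ and $a_0\leq\mu p<p$, so $I(a_0)\geq I(\mu p)$, while the polynomial prefactor is controlled using $1-a_0\geq 1/2$ together with $j_0\geq \mu m p/2$; multiplying by the term count $\mu m p+1$ yields a prefactor of order $\sqrt{\mu m p}=O(\sqrt{mp})$, as claimed.

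It then remains to replace the relative entropy $I(\mu p)$ by the explicit expression in the statement, and this is the one genuinely new inequality. Since $I(\mu p)=\mu p\log\mu+(1-\mu p)\log\frac{1-\mu p}{1-p}$, it suffices to prove $(1-\mu p)\log\frac{1-\mu p}{1-p}\geq(1-\mu)p-\mu p^2$. I would expand $\log\frac{1-\mu p}{1-p}=-\log(1-p)+\log(1-\mu p)=\sum_{k\geq1}\frac{(1-\mu^k)p^k}{k}\geq(1-\mu)p$, keeping only the first term of the series; then $(1-\mu p)\log\frac{1-\mu p}{1-p}\geq(1-\mu p)(1-\mu)p=(1-\mu)p-\mu(1-\mu)p^2\geq(1-\mu)p-\mu p^2$. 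Combining this with $I(a_0)\geq I(\mu p)$ gives $e^{-mI(a_0)}\leq\exp\{-m[\mu p\log\mu+(1-\mu)p-\mu p^2]\}$, which finishes the estimate.

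The conceptual scheme—tail dominated by its largest term, Stirling, and the Kullback--Leibler rate function—is entirely standard; the only real content is the elementary inequality of the previous paragraph and the bookkeeping needed to collapse the prefactor precisely to $O(\sqrt{mp})$. I expect that prefactor bookkeeping, and in particular the degenerate cases $j_0=0$ and $\mu$ near $1$, to be the only fiddly point, whereas the exponential rate is immediate from the single-term series bound. (I note in passing that the Chernoff inequality $\pr{\mathrm{Bin}(m,p)\leq\mu m p}\leq e^{-mI(\mu p)}$ already yields the bound with no polynomial prefactor at all, so the factor $O(\sqrt{mp})$ is merely an artifact of the summation method and is not essential for the application.)
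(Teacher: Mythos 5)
Your proposal is correct and follows essentially the same route as the paper's proof: bound the lower tail by the number of terms times its largest summand, apply Stirling to obtain the relative-entropy exponent $-mI(\mu p)$ together with the $O(\sqrt{mp})$ prefactor, and then verify $(1-\mu p)\log\frac{1-\mu p}{1-p}\geq(1-\mu)p-\mu p^2$, which the paper does via the Taylor bounds $x\leq-\log(1-x)\leq x+x^2$ for $0\leq x\leq 1/2$ and you do via the equivalent series bound $\log\frac{1-\mu p}{1-p}=\sum_{k\geq1}(1-\mu^k)p^k/k\geq(1-\mu)p$. Your refinements are sound as well: using monotonicity of $I$ on $(0,p)$ handles the floor $\lfloor\mu mp\rfloor$ more carefully than the paper's ``constant multiplicative error'' remark, and your closing observation that the Chernoff bound $\pr{\mathrm{Bin}(m,p)\leq\mu mp}\leq e^{-mI(\mu p)}$ renders the polynomial prefactor unnecessary is accurate (both your argument and the paper's implicitly assume $\mu mp\geq 1$, which holds in the intended application where $mp\to\infty$).
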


\begin{proof}
The quantities $\pr{\mathrm{Bin}(m,p) = k} = \binom{m}{k} p^k (1-p)^{m-k}$ are non-decreasing
in $k$ if $k \leq mp$. Therefore,
$$ \pr{\mathrm{Bin}(m,p) \leq \mu mp} =
\pr{\mathrm{Bin}(m,p) \leq \lfloor \mu mp \rfloor} \leq \mu mp \, \pr{\mathrm{Bin}(m,p) = \lfloor \mu mp \rfloor}.$$

We can estimate $\pr{\mathrm{Bin}(m,p) = \lfloor \mu mp \rfloor}$ by $\binom{m}{\mu mp} p^{\mu mp}(1-p)^{m-\mu mp}$
with a multiplicative error term of constant order. Stirling's approximation implies
$\binom{m}{\mu mp}$ is bounded from above by $O((m\mu p (1-\mu p))^{-1/2}) \,e^{mH(\mu p)}$.
Therefore, after some algebraic simplifications and using $1-\mu p \geq 1/2$ we deduce that
\begin{equation} \label{eqn:binomprobbound}
\pr{\mathrm{Bin}(m,p) \leq \mu mp} \leq O((mp)^{-1/2}) \, e^{m[-\mu p\log \mu -(1-\mu p)(\log(1-\mu p) -\log(1-p))]}\,.
\end{equation}

We now provide an upper bound to the exponent on the r.h.s.~of (\ref{eqn:binomprobbound}).
Note that $x \leq -\log(1-x) \leq x + x^2$ for $0 \leq x \leq 1/2$. As $\mu p \leq p < 1/2$, it follows
from these two inequalities that $\log(1-\mu p) -\log(1-p) \geq p -\mu p - \mu^2 p^2$. Hence,
$$-\mu p\log \mu -(1-\mu p)(\log(1-\mu p) -\log(1-p)) \leq -\mu p \log \mu -(1-\mu)p + \mu p^2\,.$$
The conclusion of the lemma follows upon substituting the bound above into the exponent
on the r.h.s.~of (\ref{eqn:binomprobbound}).

\end{proof}

We now use Lemma \ref{lem:binomialbound} to provide an upper bound to
$\pr{\mathrm{Bin}(\binom{\al n}{2},d/n) \leq \al n + (\mu_n/\delta)}$. We require
that $n \geq 2d$ and have that $\mu = [\al n + (\mu_n/\delta)]\big /[\binom{\al n}{2}(d/n)]$.

Recall that $\al > 2e/d$. With this assumption and for $n \geq 2d$ it is easy to show that
$\mu \leq 2/(d\al) + O_d(1/\omega_n)$. For all large $n$ we thus have $\mu \leq e^{-1}$.
From Lemma \ref{lem:binomialbound} we deduce:
\begin{equation} \label{eqn:ERedgebound}
\pr{\mathrm{Bin} \left (\binom{\al n}{2},d/n \right ) \leq \al n + (\mu_n/\delta)} \leq
O(\sqrt{nd}) \, e^{-\binom{\al n}{2}\frac{d}{n} \,[\mu \log(\mu) + 1 - \mu - \frac{d}{n}]}.
\end{equation}

We now simplify the exponent in (\ref{eqn:ERedgebound}). The function $x \to x\log x$ is decreasing
for $0 \leq x \leq e^{-1}$. Hence, as $\mu \leq 2/(d \al) + O_d(1/\omega_n) \leq e^{-1}$, we have
$\mu \log(\mu) \geq (\frac{2}{d \al} + O_d(1/\omega_n)) \log \big (\frac{2}{d \al} + O_d(1/\omega_n) \big)$.
From this lower bound on $\mu \log (\mu)$ it follows easily that
$$\mu \log(\mu) + 1 - \mu \geq \frac{2}{d \al} \log \big (\frac{2}{d \al}\big) + 1 - \frac{2}{d \al} - O_d(1/\omega_n).$$
Also, $\binom{\al n}{2}(d/n) \geq \frac{\al^2 d}{2}n - O_d(1)$. Combining these estimates we gather that
the exponent in (\ref{eqn:ERedgebound}) is bounded from above by
\begin{equation} \label{eqn:ERedgeexponent}
-n \left( \al \log \big ( \frac{2}{\al d} \big) + \frac{\al^2d}{2} - \al \right) + O_d \big ( \max \{n/\omega_n, 1\} \big).
\end{equation}

Now we can provide an upper bound to $\E{M}$ from (\ref{eqn:Zbound2}).
Stirling's approximation implies $\binom{n}{\al n} \leq 2 e^{nH(\al, 1-\al)}$.
Combining this with the bound on the binomial probability that is on the r.h.s.~of (\ref{eqn:Zbound2}),
derived from the inequalities in (\ref{eqn:ERedgebound}) and (\ref{eqn:ERedgeexponent}), we have
$$\E{M} \leq O_d(\sqrt{n}) \, e^{n \left [H(\al,1-\al) + \al \log ( \frac{2}{\al d} ) + \frac{\al^2d}{2} - \al \right ]
+ O_d(\max \{n/\omega_n, 1\})}.$$

Now, $H(\al,1-\al) + \al \log ( \frac{2}{\al d} ) + \frac{\al^2d}{2} - \al = h(1-\al) + \al(1 + \log(d/2)) - (d/2)\al^2$.
From (3) of (\ref{eqn:hproperty}) we have $h(1-\al) \leq \al$. Consequently,
$h(1-\al) + \al(1 + \log(d/2)) - (d/2)\al^2 \leq \al (2 + \log(d/2) - (d/2)\al)$.
This implies that
$$ \E{M} \leq O_d(\sqrt{n}) \, e^{\al n \big [2 + \log(d/2) - (d/2)\al \big ] + O_d(\max \{n/\omega_n, 1\})}.$$

From (\ref{eqn:Ebound}) and (\ref{eqn:Zbound2}) we have
$\pr{E} \leq \E{Z; X_{n,\tau} \leq \mu_n/\delta} + \delta \leq \E{M} + \delta$, and thus,
\begin{equation*}
\pr{E} \leq O_d(\sqrt{n}) \, e^{\al n \big [2 + \log(d/2) - (d/2)\al \big ] + O_d(\max \{n/\omega_n, 1\})} + \delta.
\end{equation*}

If $2 + \log(d/2) - (d/2)\al < 0$ then $\limsup_{n \to \infty} \pr{E} \leq \delta$ for all $\delta > 0$.
This implies $\pr{E} \to 0$ as $n \to \infty$, and thus, with high probability $\ER$ does not contain
induced subgraphs of size larger than $\al n$ such that their components have size at most
$\tau = \log_d(n) - \log\log\log(n) - \log(\omega_n)$. The condition $2 + \log(d/2) - (d/2)\al < 0$ is equivalent
to $\al > \frac{2}{d}(\log d + 2 - \log 2)$, which is precisely the bound in the statement of Theorem \ref{thm:ERpercolation}.

\section*{Acknowledgements}
The author thanks B\'{a}lint Vir\'{a}g for suggesting the problem.

\end{document}